\numberwithin{equation}{section}
  \theoremstyle{plain}
  \newtheorem{lem}{\protect\lemmaname}[section]
  \theoremstyle{plain}
  \newtheorem{thm}{\protect\theoremname}[section]
  \theoremstyle{remark}
  \newtheorem{rem}{\protect\remarkname}[section]
  \theoremstyle{definition}
  \newtheorem{defn}{\protect\definitionname}[section]
  \theoremstyle{plain}
  \newtheorem{prop}{\protect\propositionname}[section]
  \providecommand{\definitionname}{Definition}
  \providecommand{\lemmaname}{Lemma}
  \providecommand{\propositionname}{Proposition}
  \providecommand{\remarkname}{Remark}
\providecommand{\theoremname}{Theorem}
\begin{document}

\title{Simple Piecewise Geodesic Interpolation of Simple and Jordan Curves
with Applications}

\author{H. Boedihardjo%
\thanks{The Oxford-Man Institute, Eagle House, Walton Well Road, Oxford OX2 6ED.
\protect \\
Email: horatio.boedihardjo@oxford-man.ox.ac.uk %
} $\ $and X. Geng%
\thanks{Mathematical Institute, University of Oxford, Oxford OX2 6GG and the
Oxford-Man Institute, Eagle House, Walton Well Road, Oxford OX2 6ED.\protect \\
Email: xi.geng@maths.ox.ac.uk %
}}

\maketitle

\begin{abstract}
We explicitly construct simple, piecewise minimizing geodesic, arbitrarily
fine interpolation of simple and Jordan curves on a Riemannian manifold.
In particular, a finite sequence of partition points can be specified
in advance to be included in our construction. Then we present two
applications of our main results: the generalized Green's theorem
and the uniqueness of signature for planar Jordan curves with finite
$p$-variation for $1\leqslant p<2.$\\
\\
\textbf{Keywords }$\ $Piecewise Geodesic Interpolation $\cdot$  Simple Curves $\cdot$ Jordan Curves $\cdot$ Generalized
Green's Theorem $\cdot$ Uniqueness of Signature\\
\textbf{Mathematics Subject Classification (2000) }26A42 $\cdot$\textbf{
}41A05 $\cdot$ 53C22
\end{abstract}

\section{Introduction }

The classical proofs of many properties of Jordan curves (e.g. the Jordan curve theorem) or functions on Jordan curves (e.g. Cauchy's theorem) begin with the consideration of polygonal Jordan curves. As part of the proof of the Jordan curve theorem in \cite{Proof of Jordan curve theroem},
it was shown that for every planar Jordan curve, there is a polygonal Jordan curve that approximates the original Jordan curve arbitrarily well. Here we shall prove a stronger and more general fact that given a Jordan curve on a connected Riemannian manifold M and $n$ points on the curve, there exists a simple, piecewise minimizing geodesic, arbitrarily fine interpolation which contains these $n$ points as interpolation points. The proof relies on another main result of this paper for non-closed simple curves. Such case was first treated by Werness \cite{Wer12}, in which the author used an inductive but non-constructive method. Here we provide another proof of this result, which has the advantage of being explicit and constructive, and hence numerically computable.

We would like to emphasize that our approximation, unlike that of
\cite{Proof of Jordan curve theroem} which is a direct consequence of our result, does not rely on the flatness of the Euclidean metric, and respects the parametrization of the curve, i.e. it is an interpolation rather than merely an approximation in the uniform norm. The latter is particularly important for applications in the context of rough path theory, where we approximate continuous paths by bounded variation ones in the $p$-variation metric. Such idea is fundamental to the study of the roughness of continuous paths, and particularly of sample paths of continuous stochastic processes (see \cite{fBM}, \cite{friz}).

We also give two applications of our main result.

Taking advantage of the fact that the $p$-variation of a piecewise
linear interpolation of a path is bounded by the $p$-variation of
the path itself, our approximation theorem gives immediately Green's
theorem for planar Jordan curves with finite $p$-variation, where
$1\leqslant p<2$. To our best knowledge, in the rough path literature,
the only other attempt in extending Green's theorem to
non-rectifiable curves appeared in \cite{Yamthesis}, where Green's
theorem was proved for the boundaries of $\alpha$-Hölder domains for $\frac{1}{3}<\alpha<1$. Our result is a partial generalization
of Yam's. Yam's result requires the curve to be $\alpha$-Hölder continuous
under the conformal parametrization whereas our result only requires
the curve to be $\alpha$-Hölder continuous under some parametrization. 

A long-standing open problem in rough path theory is to what extent
a path can be determined from its iterated integrals of any order. This is
usually known as the uniqueness of signature problem. Hambly and Lyons
\cite{tree like} proved that the iterated integrals of a rectifiable
curve vanish if and only if the path is tree-like, based on a similar
type of approximation result for tree-like paths. Using our approximation
result, we prove the uniqueness of signature for planar Jordan curves
with finite $p$-variations, where $1\leqslant p<2$. The case of
non-closed simple curves was treated in \cite{simple curve uniqueness }.
To our best knowledge this is the strongest uniqueness of signature
result so far for non-rectifiable curves. 

Throughout the rest of this paper, all curves are assumed to be continuous.

\section{Simple Piecewise Geodesic Interpolation of Simple and Jordan Curves}

In this section, we are going to prove our main results about simple
piecewise geodesic approximation of simple and Jordan curves in Riemannian
manifolds. Although the most interesting and nontrivial case lies
in the Euclidean plane, we formulate our problems in a Riemannian
geometric setting of arbitrary dimension since the proofs do not rely
on Euclidean geometry (that is, the ``flatness'' of Euclidean metric)
at all. 

Throughout this section, let $M$ be a $d$-dimensional connected
Riemannian manifold ($d\geqslant2$).

The following lemma, which is an easy fact from Riemannian geometry,
is fundamental for us to formulate our main results. 
\begin{lem}
\label{minimizing geodesic}For any compact set $K\subset M$, there
exists some $\varepsilon=\varepsilon_{K}>0,$ such that for any $x,y\in K$
with 
\[
d\left(x,y\right)<\varepsilon,
\]
 there exists a unique minimizing geodesic in $M$ joining $x$ and
$y$, where $d\left(\cdot,\cdot\right)$ denotes the Riemannian distance
function.\end{lem}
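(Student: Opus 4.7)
My plan is to reduce the statement to the continuity and positivity of the injectivity radius function on $M$, combined with the compactness of $K$. Concretely, I would first recall the standard Riemannian-geometric fact that at every point $p\in M$ the exponential map $\exp_{p}$ is a diffeomorphism from an open ball of some radius $\mathrm{inj}(p)>0$ in $T_{p}M$ onto the geodesic ball $B(p,\mathrm{inj}(p))\subset M$, and that the resulting function $\mathrm{inj}:M\to(0,\infty]$ is continuous (in fact $1$-Lipschitz) and strictly positive.

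Next I would apply this on $K$: because $\mathrm{inj}$ is continuous and strictly positive and $K$ is compact, the infimum $r_{0}=\inf_{p\in K}\mathrm{inj}(p)$ is attained and strictly positive. I would then set $\varepsilon_{K}=r_{0}/2$ (the factor $1/2$ only ensures strict inequalities). For any $x,y\in K$ with $d(x,y)<\varepsilon_{K}<\mathrm{inj}(x)$, the point $y$ lies in $B(x,\mathrm{inj}(x))$, so there is a unique $v\in T_{x}M$ with $|v|=d(x,y)$ and $\exp_{x}(v)=y$. The geodesic $t\mapsto\exp_{x}(tv)$, $t\in[0,1]$, is then a minimizing geodesic from $x$ to $y$. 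Uniqueness among all minimizing geodesics in $M$ follows because any minimizing geodesic from $x$ to $y$ has length $d(x,y)<\mathrm{inj}(x)$ and is therefore determined by an initial velocity $\tilde v\in T_{x}M$ with $|\tilde v|=d(x,y)$ and $\exp_{x}(\tilde v)=y$, which by the injectivity of $\exp_{x}$ on the open ball of radius $\mathrm{inj}(x)$ forces $\tilde v=v$.

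The only real obstacle is passing from an existence-and-uniqueness statement around each single point to a uniform $\varepsilon$ over the compact set $K$; this is exactly what the continuity of the injectivity radius achieves in a single step. If one wished to avoid quoting this continuity, the alternative would be to cover $K$ by totally normal (convex) neighborhoods, extract a finite subcover, and apply a Lebesgue-number argument, with additional care to upgrade uniqueness inside one such neighborhood to uniqueness in $M$ (for instance, by shrinking the neighborhoods so that any curve of length less than the candidate $\varepsilon$ is automatically trapped inside one of them). This fallback is more roundabout, so I would prefer the injectivity-radius formulation.
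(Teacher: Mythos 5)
Your proposal is correct in spirit but genuinely different from the paper's argument. The paper uses the most elementary tool available: it covers $K$ by geodesically convex normal balls $B(x,\delta_x)$ (do Carmo, Ch.~3, Prop.~4.2), extracts a finite subcover by the half-radius balls, and then uses a Lebesgue-number-type estimate to place any two $\varepsilon$-close points of $K$ inside a single convex ball, where uniqueness and existence of the minimizing geodesic in $M$ are immediate from convexity. You instead quote continuity and strict positivity of the injectivity radius $\mathrm{inj}:M\to(0,\infty]$ and let compactness of $K$ produce a uniform lower bound $r_0$, from which you read off existence (radial geodesic in the normal ball of radius $\mathrm{inj}(x)$) and uniqueness (injectivity of $\exp_x$ on that ball).

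What each route buys: the paper's argument is essentially self-contained, requiring only the local existence of convex normal balls, which is completely elementary and valid for any Riemannian manifold, complete or not. Your argument is shorter and conceptually cleaner once one grants the global input, but that global input — continuity of $\mathrm{inj}$ — is a substantially deeper fact than anything the paper invokes, and it deserves more caution than you give it. In particular, the parenthetical claim that $\mathrm{inj}$ is $1$-Lipschitz is a statement I would want you to source or drop: the Lipschitz estimate is typically proved for complete manifolds (via Klingenberg's lemma and the characterisation of $\mathrm{inj}(p)$ through conjugate points and geodesic loops), and the lemma here is stated for an arbitrary connected Riemannian manifold $M$, where geodesic incompleteness can also cap $\mathrm{inj}$. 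Plain continuity, or even just lower semicontinuity combined with compactness, is what you actually use and is safer, but you should be explicit that this is the fact being cited and that it applies without completeness. You have in fact correctly identified the paper's approach as the "fallback" in your last paragraph; given the generality of $M$ in the statement, that fallback is arguably the cleaner route here, not the more roundabout one.

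One small further point: in the uniqueness step you parametrize minimizing geodesics over $[0,1]$ so that $|\tilde v|=d(x,y)$; this is fine, but you should say so, since otherwise the norm of the initial velocity depends on the parametrization and the appeal to injectivity of $\exp_x$ needs the initial velocity to lie inside the ball of radius $\mathrm{inj}(x)$.
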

\begin{proof}
For any $x\in K,$ choose $\delta_{x}$ small enough such that $B\left(x,\delta_{x}\right)$
is a geodesically convex normal ball (see \cite{do carmo}, p. 76,
Proposition 4.2). By compactness, we have a finite covering of $K:$
\[
K\subset\bigcup_{i=1}^{k}B\left(x_{i},\frac{\delta_{x_{i}}}{2}\right),
\]
where $x_{1},\cdots,x_{k}\in K.$ Let $\varepsilon=\frac{1}{2}\min\left\{ \delta_{x_{1}},\cdots,\delta_{x_{k}}\right\} .$
It follows that for any $x,y\in K$ with $d\left(x,y\right)<\varepsilon,$
there exists some $1\leqslant i\leqslant k,$ such that $x,y\in B\left(x_{i},\delta_{x_{i}}\right)$.
Therefore, by geodesic convexity we know that $x$ and $y$ can be
joined by a unique minimizing geodesic in $M$ which lies in $B\left(x_{i},\delta_{x_{i}}\right).$ 
\end{proof}

Now we are in position to state our main results.

The first main result is a simple piecewise geodesic approximation
theorem for non-closed simple curves in $M$.
\begin{thm}
\label{simple curve approximation} Let $\gamma$ be a  non-closed
simple curve in $M$. For all $\varepsilon>0,$ there exists a finite
partition 
\[
\mathcal{P}_{\varepsilon}:\ 0=t_{0}<t_{1}<\cdots<t_{n-1}<t_{n}=1
\]
of $[0,1],$ such that 

(1) the mesh size of the partition $\|\mathcal{P}_{\varepsilon}\|=\max_{i=1,\cdots,n}\left(t_{i}-t_{i-1}\right)<\varepsilon$;

(2) for any $i=1,\cdots,n,$ $\gamma_{t_{i-1}}$ and $\gamma_{t_{i}}$
can be joined by a unique minimizing geodesic in $M$, and the piecewise
geodesic interpolation (more precisely, piecewise minimizing geodesic
interpolation, and the same thereafter) $\gamma^{\mathcal{P}_{\varepsilon}}$
of $\gamma$ over the partition points in $\mathcal{P}_{\varepsilon}$
is a simple curve.
\end{thm}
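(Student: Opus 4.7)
The plan is to quantify both the continuity and the injectivity of $\gamma$ from the compactness of its image, and then to build an explicit equal-width partition whose mesh is fine enough to force simplicity through a local/global split.

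\textbf{Step 1 (scales).} Set $K=\gamma([0,1])$, which is compact. Lemma~\ref{minimizing geodesic} yields $\varepsilon_{K}>0$ such that any two points of $K$ within Riemannian distance $\varepsilon_{K}$ are joined by a unique minimizing geodesic, lying inside a geodesically convex normal ball. Uniform continuity of $\gamma$ on $[0,1]$ provides a modulus $\omega(h)\to 0$ as $h\to 0$ with $d(\gamma_{s},\gamma_{t})\leqslant\omega(|s-t|)$. Because $\gamma\colon[0,1]\to K$ is a continuous bijection onto a Hausdorff space it is a homeomorphism, so $\gamma^{-1}$ is uniformly continuous on $K$: for each $\eta>0$ there exists $\mu(\eta)>0$ such that $|s-t|\geqslant\eta$ forces $d(\gamma_{s},\gamma_{t})\geqslant\mu(\eta)$.

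\textbf{Step 2 (construction).} For a large integer $n$ set $t_{i}=i/n$. Condition (1) reduces to $1/n<\varepsilon$, and taking $n$ with $\omega(1/n)<\varepsilon_{K}$ ensures that each consecutive pair $\gamma_{t_{i-1}},\gamma_{t_{i}}$ is joined by a unique minimizing geodesic $g_{i}$ contained in a convex normal ball, giving the first half of (2). Each $g_{i}$ is embedded, and for consecutive $g_{i},g_{i+1}$ inside a common convex normal ball, any second intersection point beyond $\gamma_{t_{i}}$ would contradict the uniqueness of short minimizing geodesics, so $g_{i}\cap g_{i+1}=\{\gamma_{t_{i}}\}$.

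\textbf{Step 3 (non-adjacent pairs).} The heart of the theorem is ruling out $g_{i}\cap g_{j}$ for $|i-j|\geqslant 2$. Writing $L_{k}=d(\gamma_{t_{k-1}},\gamma_{t_{k}})\leqslant\omega(1/n)$, a point $p$ on the minimizing geodesic $g_{i}$ satisfies $\min(d(p,\gamma_{t_{i-1}}),d(p,\gamma_{t_{i}}))\leqslant L_{i}/2$, and analogously for $j$, so a hypothetical intersection produces two partition points with parameter separation $\geqslant 1/n$ but Riemannian distance $\leqslant\omega(1/n)$. I would split into two regimes. In the \emph{far} regime $|i-j|>K_{0}$ the parameter gap is at least $K_{0}/n$, and for $K_{0}$ fixed one has $\mu(K_{0}/n)>\omega(1/n)$ for all large $n$, closing the case. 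In the \emph{close} regime $|i-j|\leqslant K_{0}$, all the arcs and chords involved lie in a single convex normal ball, where an argument in normal coordinates (or a Jacobi field comparison with Euclidean chords) shows that minimizing geodesic chords of a simple arc cannot cross at an interior point.

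\textbf{Where it will be hard.} The close regime is the main obstacle: the crude metric inequality $\mu(1/n)\leqslant\omega(1/n)$ is always adverse (a straight line saturates it), so the argument cannot be purely metric and must exploit honest geometry inside a single convex normal ball. Making this step quantitative and effective, rather than appealing to Werness's inductive existence proof, is precisely what is needed to render the construction explicit and numerically computable, as the authors emphasise.
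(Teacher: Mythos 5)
Your Step 2 commits to the equal-width partition $t_i=i/n$, and that choice cannot be made to work: the rest of the argument has to supply a quantitative separation between the images of non-adjacent partition points, and an equal-width grid simply does not provide it. Two concrete problems. First, in the \emph{far} regime you assert that for fixed $K_0$ one has $\mu(K_0/n)>\omega(1/n)$ for all large $n$. This is false in general. If $\gamma$ is $\alpha$-H\"older and $\gamma^{-1}$ is $\beta$-H\"older on the image, consistency as $|s-t|\to 0$ forces $1/\beta\geqslant\alpha$, and whenever the inequality is strict one has $\mu(K_0/n)\sim (K_0/n)^{1/\beta}\ll n^{-\alpha}\sim\omega(1/n)$ for large $n$ with $K_0$ fixed. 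So the far regime is not closed by a fixed $K_0$. Second, the \emph{close} regime claim that ``minimizing geodesic chords of a simple arc cannot cross at an interior point'' inside a convex normal ball is already false in $\mathbb{R}^2$: the five points $(0,0),(4,0),(4,1),(0,1),(2,-1)$, visited in that order by a simple arc, have first chord $[(0,0),(4,0)]$ and fourth chord $[(0,1),(2,-1)]$ crossing at $(1,0)$, and the whole picture can be rescaled to sit in an arbitrarily small ball and nested at arbitrarily many scales. So for such a curve the equal-width interpolation fails to be simple for every $n$. Your adjacent-segment argument has a similar hole: two consecutive chords sharing a second point means they lie on a common geodesic, but without an extra constraint on the three endpoint distances this does not contradict simplicity of $\gamma$.

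The missing idea, and the one the paper uses, is to make the partition \emph{adaptive} rather than equal-width. Define $t_i$ to be the \emph{last exit time} from a small geodesic ball: $t_i=\sup\{t\in[t_{i-1},1]:\gamma_t\in\overline{B}(\gamma_{t_{i-1}},\delta_\varepsilon/2)\}$, where $\delta_\varepsilon$ encodes both the uniform continuity of $\gamma^{-1}$ (to control the mesh) and Lemma~\ref{minimizing geodesic} (to get unique short geodesics). This construction guarantees two things your grid lacks: $d(\gamma_{t_{i-1}},\gamma_{t_i})=\delta_\varepsilon/2$ for all interior $i$ (which is what rescues the adjacent case, via uniqueness of geodesics and equality of the two chord lengths forcing $\gamma_{t_{i-1}}=\gamma_{t_{i+1}}$, a contradiction), and $d(\gamma_{t_i},\gamma_{t_j})>\delta_\varepsilon/2$ for all non-adjacent $i,j$ (since each later partition point lies strictly outside all earlier $\overline{B}(\gamma_{t_{i-1}},\delta_\varepsilon/2)$). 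The non-adjacent case is then closed not by a normal-coordinate or Jacobi-field argument but by Lemma~\ref{geodesic fact 1}: if two minimizing geodesic chords of length at most $r$ meet, at least one of the four endpoint distances is \emph{strictly} less than $r$. Applied with $r=\delta_\varepsilon/2$, this contradicts the separation just established. So the ``honest geometry'' you anticipate needing is the four-distance lemma, and it is the adaptive partition that puts you in a position to apply it.
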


The proof of Theorem \ref{simple curve approximation} relies on the
following crucial lemma, which depends heavily on properties of minimizing
geodesics. In the Euclidean case, we illustrate the lemma in Figure
1, which says that if the lengths of the straight line segments $\overline{xy}$
and $\overline{zw}$ are both less than or equal to $r$, then at
least one of the four line segments $\overline{xz},\overline{xw},\overline{yz},\overline{yw}$
has length strictly less than $r$. 

\begin{figure}
\begin{center}

\includegraphics[scale=0.28]{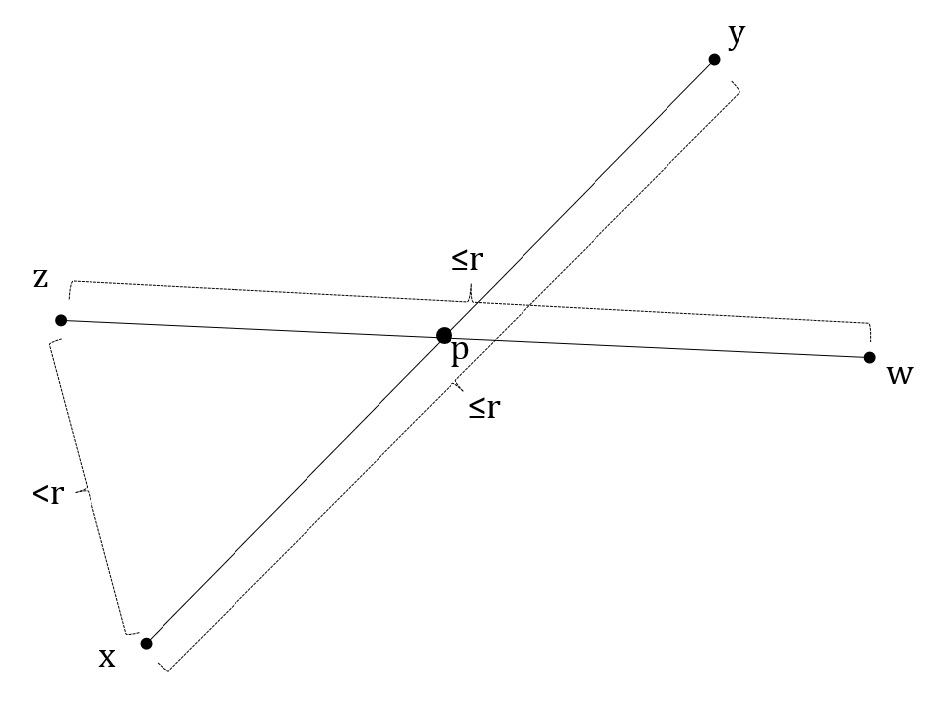}
\caption{This figure illustrates the relative positions of the points in Lemma
\ref{geodesic fact 1} in the Euclidean case. The lengths of the line
segments $\overline{xy}$ and $\overline{zw}$ are less than or equal
to $r$. Here the length of $\overline{zx}$ is strictly less than
$r$. }

\end{center}
\end{figure}

\begin{lem}
\label{geodesic fact 1} Let $x,y,z,w\in M$ and $\alpha:\ \left[0,1\right]\rightarrow M$
(respectively, $\beta:\ \left[0,1\right]\rightarrow M$) be a minimizing
geodesic joining $x$ and $y$ (respectively, $z$ and $w$). Assume
that $\alpha\left(\left[0,1\right]\right)\bigcap\beta\left(\left[0,1\right]\right)\neq\emptyset$
and for some $r>0$, $d\left(x,y\right)\leqslant r,\ d\left(z,w\right)\leqslant r$.
Then at least one of $d\left(x,z\right),d\left(y,z\right),d\left(x,w\right),d\left(y,w\right)$
is strictly less than $r$.\end{lem}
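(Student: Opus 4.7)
The plan is to exploit a common intersection point $p \in \alpha([0,1]) \cap \beta([0,1])$, writing $p = \alpha(s) = \beta(t)$. Since $p$ lies on the minimizing geodesic $\alpha$ we have $d(x,p) + d(p,y) = d(x,y) \leqslant r$, and similarly $d(z,p) + d(p,w) = d(z,w) \leqslant r$. Summing the four triangle inequalities $d(u,v) \leqslant d(u,p) + d(p,v)$ for $(u,v) \in \{x,y\} \times \{z,w\}$ then yields
\[
d(x,z) + d(x,w) + d(y,z) + d(y,w) \leqslant 2\,d(x,y) + 2\,d(z,w) \leqslant 4r.
\]
If all four distances on the left were $\geqslant r$, the sum would be $\geqslant 4r$, forcing equality throughout: each triangle inequality is sharp, $d(x,y) = d(z,w) = r$, and $p$ is equidistant (exactly $r/2$) from each of $x, y, z, w$; in particular $p$ is distinct from the four endpoints, so $s, t \in (0,1)$.

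The contradiction is then extracted from the sharpness. The equality $d(x,z) = d(x,p) + d(p,z)$ says that the broken curve formed by concatenating $\alpha|_{[0,s]}$ with the reversal of $\beta|_{[0,t]}$ is length-minimizing from $x$ to $z$, hence itself a smooth geodesic; this forces its one-sided velocity vectors at $p$ to match after normalization, giving
\[
\frac{\dot\alpha(s)}{|\dot\alpha(s)|} = -\,\frac{\dot\beta(t)}{|\dot\beta(t)|}.
\]
Applying the identical argument to $d(x,w) = d(x,p) + d(p,w)$, this time concatenating $\alpha|_{[0,s]}$ with the forward segment $\beta|_{[t,1]}$, produces
\[
\frac{\dot\alpha(s)}{|\dot\alpha(s)|} = +\,\frac{\dot\beta(t)}{|\dot\beta(t)|}.
\]
Adding these two relations yields $\dot\alpha(s)/|\dot\alpha(s)| = 0$, which is absurd since the left side is a unit vector.

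The one step requiring a careful citation is the implication ``a length-minimizing curve in $M$ is a smooth geodesic'', invoked to rule out a corner at $p$: it is this fact, combined with the uniqueness of a geodesic through a given point with a given velocity, that ensures the one-sided velocity vectors of a minimizing concatenation must coincide after normalization. This is standard Riemannian geometry (see e.g. do Carmo), and every other step in the proof is elementary use of the triangle inequality.
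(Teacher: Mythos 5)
Your proof is correct, and it takes a genuinely different route to the contradiction than the paper does, although both hinge on the same geometric fact that a length-minimizing piecewise-geodesic broken at $p$ must actually be a smooth geodesic (no corner). Where the paper pigeonholes into four cases on which of $d(x,p),d(y,p)$ and $d(z,p),d(w,p)$ are at most $r/2$, and then cascades through implications to eventually conclude that all four are exactly $r/2$, you simply sum the four triangle inequalities over $\{x,y\}\times\{z,w\}$ and deduce the same tightness in one stroke --- this is cleaner and avoids the case analysis entirely. The endgame also differs: the paper uses a single concatenation ($\alpha|_{[0,1/2]}$ with the reversal of $\beta|_{[0,1/2]}$), invokes uniqueness of geodesics given initial point and velocity to conclude $\widetilde\alpha=\alpha$, and thereby $y=z$ (so $d(y,z)=0<r$); you instead form two concatenations, one to $z$ and one to $w$, and extract incompatible conditions on the unit tangent vectors at $p$, yielding a direct contradiction. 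Both endgames are legitimate. One small presentational remark: like the paper, you should state up front that $\alpha$ and $\beta$ are taken to be parametrized at constant speed, since you use $\dot\alpha(s)$, $\dot\beta(t)$; your own tightness computation guarantees $d(x,y)=d(z,w)=r>0$ and $s,t\in(0,1)$, so these velocities are nonzero and the normalization is licit.
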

\begin{proof}
Without loss of generality, we shall assume that all geodesics are parametrized
at constant speed. Let $\alpha\left(u\right)=\beta\left(v\right)=p$
for some $u,v\in\left[0,1\right].$ Since $\alpha$ and $\beta$ are
minimizing geodesics, we know that
\begin{eqnarray*}
d\left(x,y\right)=d\left(x,p\right)+d\left(p,y\right) & \leqslant & r,\\
d\left(z,w\right)=d\left(z,p\right)+d\left(p,w\right) & \leqslant & r.
\end{eqnarray*}
Therefore, at least one of the following four cases happens:

(1) $d\left(x,p\right)\leqslant\frac{r}{2},\ d\left(z,p\right)\leqslant\frac{r}{2}$;

(2) $d\left(x,p\right)\leqslant\frac{r}{2},\ d\left(p,w\right)\leqslant\frac{r}{2}$;

(3) $d\left(p,y\right)\leqslant\frac{r}{2},\ d\left(z,p\right)\leqslant\frac{r}{2}$;

(4) $d\left(p,y\right)\leqslant\frac{r}{2};\ d\left(p,w\right)\leqslant\frac{r}{2}.$

First assume that (1) holds. It follows that
\[
d\left(x,z\right)\leqslant d\left(x,p\right)+d\left(z,p\right)\leqslant r.
\]
If $d\left(x,z\right)=r,$ then
\[
d(x,p)=d(z,p)=\frac{r}{2},
\]
and hence (4) holds, which implies 
\[
d\left(y,w\right)\leqslant d\left(p,y\right)+d\left(p,w\right)\leqslant r.
\]
If $d\left(y,w\right)=r,$ then 
\[
d\left(p,y\right)=d\left(p,w\right)=\frac{r}{2}.
\]
Consequently, we have $u=v=\frac{1}{2}.$

Now define 
\[
\widetilde{\alpha}\left(t\right)=\begin{cases}
\alpha\left(t\right), & t\in\left[0,\frac{1}{2}\right];\\
\beta\left(1-t\right), & t\in\left[\frac{1}{2},1\right].
\end{cases}
\]
Since 
\[
\mbox{Length}\left(\widetilde{\alpha}\right)=r=d\left(x,z\right),
\]
$\widetilde{\alpha}$ is minimizing. Moreover, since any geodesic
has constant speed, by definition we know that $\widetilde{\alpha}$
is parametrized proportionally to arc length. It follows from the
first variation formula (see \cite{do carmo}, p. 195, Proposition
2.4) that $\widetilde{\alpha}$ must be a geodesic. However, since
$\widetilde{\alpha}|_{\left[0,\frac{1}{2}\right]}=\alpha|_{\left[0,\frac{1}{2}\right]},$
by the uniqueness of geodesics we have $\widetilde{\alpha}=\alpha$
and hence $y=z.$ Similarly we have $x=w.$

The other cases can be treated in the same way, which completes the
proof of the lemma.
\end{proof}

With the help of Lemma \ref{geodesic fact 1}, we can now prove Theorem
\ref{simple curve approximation}. The key idea is to construct a
sequence of times $t_{1},t_{2},\ldots$, such that $t_{i+1}$ is the
\textit{last} exit time of $\gamma$ from a small geodesic ball around
$\gamma_{t_{i}}$ after time $t_{i}$. The uniform continuity of the
inverse of the map $t\rightarrow\gamma_{t}$ will guarantee that $t_{i}$
and $t_{i+1}$ are close. We then need to argue that adjacent geodesic
segments as well as non-adjacent geodesic segments in the approximation
curve do not intersect. The latter uses Lemma \ref{geodesic fact 1}.
We illustrate the first step of the construction in Figure 2.

\begin{proof}[Proof of Theorem 2.1] Fix $\varepsilon>0.$ Since $\gamma$
is a continuous and injective mapping from the compact space $\left[0,1\right]$
to the Hausdorff space $M,$ it is a homeomorphism from $\left[0,1\right]$
to its image. By compactness and hence uniform continuity of $\gamma^{-1}$
we know that there exists $\delta_{\varepsilon}>0$ such that for
any $s,t\in\left[0,1\right],$
\[
d\left(\gamma_{s},\gamma_{t}\right)<\delta_{\varepsilon}\implies\left|t-s\right|<\varepsilon.
\]
We further assume that $\delta_{\varepsilon}<\varepsilon_{\gamma\left(\left[0,1\right]\right)}$,
where $\varepsilon_{\gamma\left(\left[0,1\right]\right)}$ is the
positive number in Lemma \ref{minimizing geodesic} depending on the
compact set $\gamma\left(\left[0,1\right]\right)\subset M.$ It follows
from Lemma \ref{minimizing geodesic} that for any $s,t\in\left[0,1\right]$
with $d\left(\gamma_{s},\gamma_{t}\right)<\delta_{\varepsilon},$
$\gamma_{s}$ and $\gamma_{t}$ can be joined by a unique minimizing
geodesic in $M$. Now define an increasing sequence of points $\left\{ t_{i}\right\} {}_{i=0}^{\infty}$
in $\left[0,1\right]$ inductively by setting $t_{0}=0$ and
\[
t_{i}=\sup\left\{ t\in\left[t_{i-1},1\right]:\ \gamma_{t}\in\overline{B}\left(\gamma_{t_{i-1}},\frac{\delta_{\varepsilon}}{2}\right)\right\} ,\ i\geqslant1.
\]

\begin{figure}
\begin{center}

\includegraphics[scale=0.34]{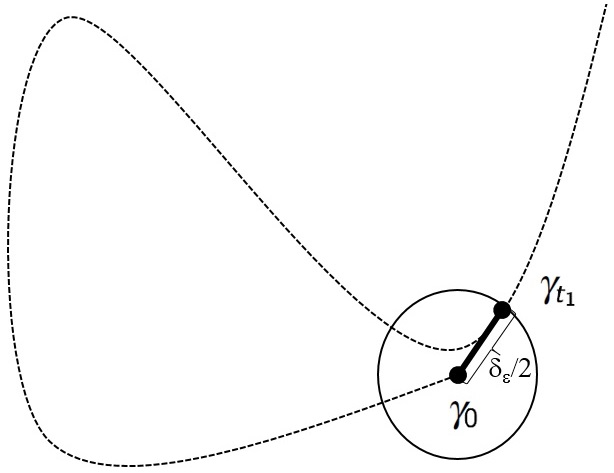}
\caption{This figure illustrates the first step in the construction given in
the proof of Theorem \ref{simple curve approximation}. The dotted
line represents the simple curve $\gamma$. The solid geodesic segment
joining the point $\gamma_{0}$ and $\gamma_{t_{1}}$ represents the
first step in the construction of the piecewise geodesic interpolation
of $\gamma$. Note that we take $t_{1}$ to be the last exit time
of $\gamma$ from a $\frac{\delta_{\varepsilon}}{2}$-geodesic ball
centered at $\gamma_{0}$.}

\end{center}
\end{figure}

We claim that there exists some $l\geqslant1,$ such that for all
$i\geqslant l$, $t_{i}=1$. In fact, if it is not the case, then
for any $i\geqslant1,$ we have 
\[
t_{i-1}<t_{i}<1\ \mbox{and}\ d\left(\gamma_{t_{i-1}},\gamma_{t_{i}}\right)=\frac{\delta_{\varepsilon}}{2}.
\]
On the other hand, by the uniform continuity of $\gamma,$ there exists
some $\eta_{\varepsilon}>0,$ such that for any $s,t\in\left[0,1\right],$
\[
\left|t-s\right|<\eta_{\varepsilon}\implies d\left(\gamma_{s},\gamma_{t}\right)<\frac{\delta_{\varepsilon}}{2}.
\]
Therefore, for any $i\geqslant1,$ $\left|t_{i}-t_{i-1}\right|\geqslant\eta_{\varepsilon},$
which is an obvious contradiction. Now set 
\[
l=\min\left\{ i\geqslant1:\ t_{i}=1\right\} ,
\]
and define
\[
\mathcal{P}_{\varepsilon}:\ 0=t_{0}<t_{1}<\cdots<t_{l-1}<t_{l}=1
\]
to be a finite partition of $\left[0,1\right].$ Then it is easy to
see that $\left\Vert \mathcal{P}_{\varepsilon}\right\Vert <\varepsilon$,
where $\left\Vert \mathcal{P}_{\varepsilon}\right\Vert $ denote the
mesh size of the partition $\mathcal{P}_{\varepsilon}$.

It remains to show that the piecewise geodesic interpolation $\gamma^{\mathcal{P}_{\varepsilon}}$
of $\gamma$ over the points of $\mathcal{P}_{\varepsilon}$ is a
simple curve. 

To see this, first notice that for adjacent intervals $\left[t_{i-1},t_{i}\right],\ \left[t_{i},t_{i+1}\right],$
we have 
\[
\gamma^{\mathcal{P}_{\varepsilon}}|_{\left[t_{i-1},t_{i}\right]}\bigcap\gamma^{\mathcal{P}_{\varepsilon}}|_{\left[t_{i},t_{i+1}\right]}=\left\{ \gamma_{t_{i}}\right\} .
\]
In fact, if it is not the case, then there exist $s_{1}\in\left[t_{i-1},t_{i}\right)$
and $s_{2}\in\left(t_{i},t_{i+1}\right]$ such that 
\[
\gamma_{s_{1}}^{\mathcal{P}_{\varepsilon}}=\gamma_{s_{2}}^{\mathcal{P}_{\varepsilon}}\neq\gamma_{t_{i}}.
\]

If $i<l-1$, then by applying Lemma \ref{minimizing geodesic} with
$x=\gamma_{t_{i}}$ and $y=\gamma_{s_{1}}^{\mathcal{P}_{\varepsilon}}=\gamma_{s_{2}}^{\mathcal{P}_{\varepsilon}}$,
$\gamma^{\mathcal{P}_{\varepsilon}}|_{\left[s_{1},t_{i}\right]}$
is a reparametrization of the reversal of $\gamma^{\mathcal{P}_{\varepsilon}}|_{\left[t_{i},s_{2}\right]}$,
which we denote as $\overleftarrow{\gamma^{\mathcal{P}_{\varepsilon}}}|_{\left[t_{i},s_{2}\right]}$.
In particular, $\gamma^{\mathcal{P}_{\varepsilon}}|_{\left[t_{i},t_{i+1}\right]}$
and $\overleftarrow{\gamma^{\mathcal{P}_{\varepsilon}}}|_{\left[t_{i-1},t_{i}\right]}$
are geodesics that starts at the same position with the same initial
velocity. By the uniqueness of geodesic, either $\gamma^{\mathcal{P}_{\varepsilon}}\left(\left[t_{i},t_{i+1}\right]\right)\subseteq\overleftarrow{\gamma^{\mathcal{P}_{\varepsilon}}}\left(\left[t_{i-1},t_{i}\right]\right)$
or $\overleftarrow{\gamma^{\mathcal{P}_{\varepsilon}}}\left(\left[t_{i-1},t_{i}\right]\right)\subseteq\gamma^{\mathcal{P}_{\varepsilon}}\left(\left[t_{i},t_{i+1}\right]\right)$.
In particular we have either $\gamma^{\mathcal{P}_{\varepsilon}}|_{\left[t_{i-1},t_{i}\right]}$
passes through $\gamma_{t_{i+1}}$ or $\gamma^{\mathcal{P}_{\varepsilon}}|_{\left[t_{i},t_{i+1}\right]}$
passes through $\gamma_{t_{i-1}}$. As $\overleftarrow{\gamma^{\mathcal{P}_{\varepsilon}}}|_{\left[t_{i-1},t_{i}\right]}$
and $\gamma^{\mathcal{P}_{\varepsilon}}|_{\left[t_{i},t_{i+1}\right]}$
are minimizing geodesics and we have $d\left(\gamma_{t_{i}},\gamma_{t_{i-1}}\right)=d\left(\gamma_{t_{i}},\gamma_{t_{i+1}}\right)$,
we conclude that $\gamma_{t_{i-1}}=\gamma_{t_{i+1}}$ which contradicts
that $\gamma$ is simple. Figure 3 illustrates this argument. 

\begin{figure}
\begin{center}

\includegraphics[scale=0.2]{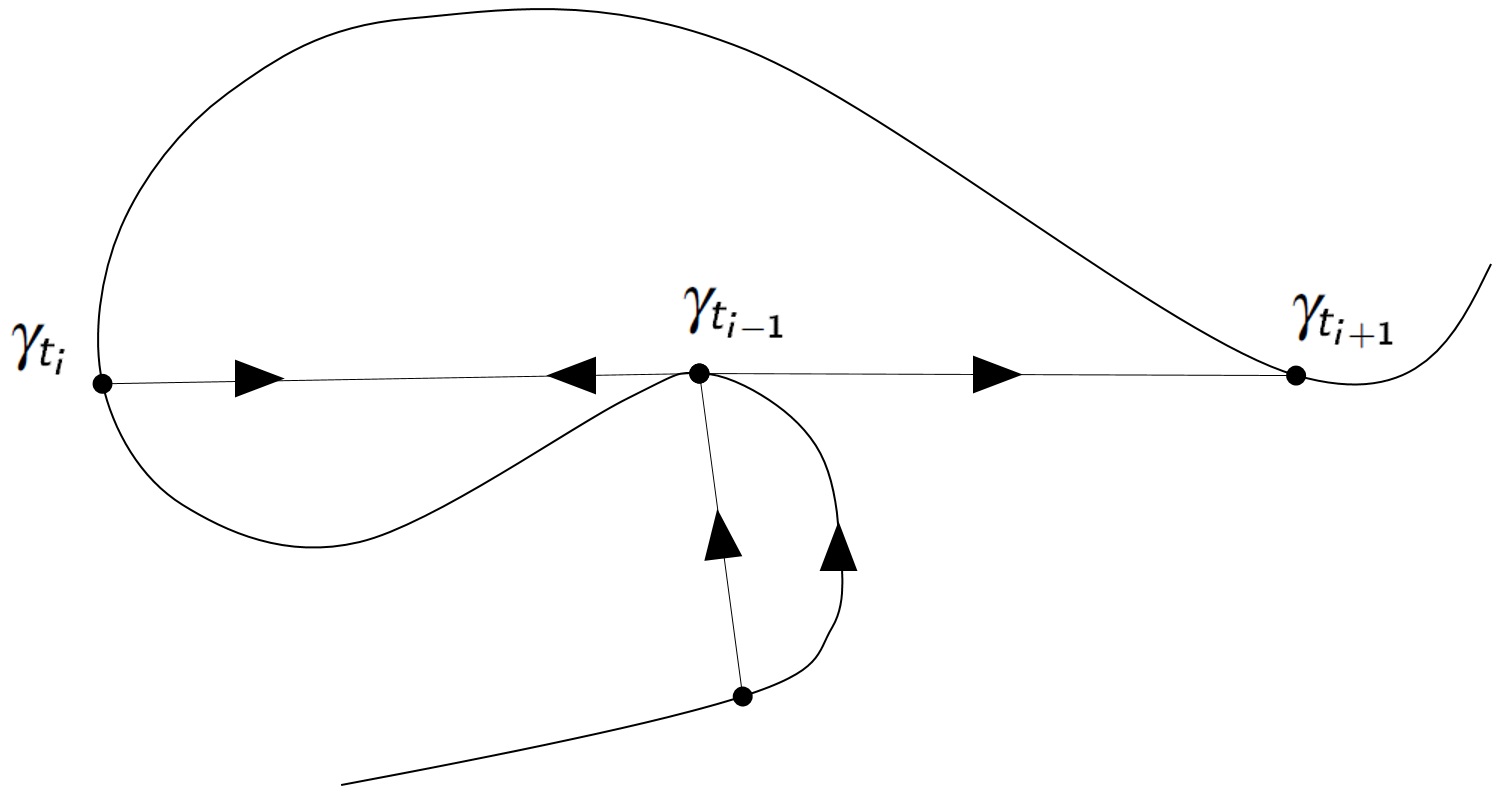}
\caption{This figure illustrates the argument in the proof of Theorem \ref{simple curve approximation}
that two adjacent line segment of the approximation curve we constructed
cannot intersect. The straight line represents the geodesic segments
in the piecewise geodesic interpolation of $\gamma$. $\gamma_{t_{i-1}},\gamma_{t_{i}},\gamma_{t_{i+1}}$
are subdivision points of the curve. If the two adjacent line segments
do intersect, as in the figure below, then $\gamma_{t_{i+1}}$ would
be closer to $\gamma_{t_{i-1}}$ than to $\gamma_{t_{i}}$ which would
contradict our construction. }

\end{center}
\end{figure}

If $i=l-1$, then arguing as in the case $i<l-1$, we have either
$\gamma^{\mathcal{P}_{\varepsilon}}\left(\left[t_{i},t_{i+1}\right]\right)\subseteq\overleftarrow{\gamma^{\mathcal{P}_{\varepsilon}}}\left(\left[t_{i-1},t_{i}\right]\right)$
or $\overleftarrow{\gamma^{\mathcal{P}_{\varepsilon}}}\left(\left[t_{i-1},t_{i}\right]\right)\subseteq\gamma^{\mathcal{P}_{\varepsilon}}\left(\left[t_{i},t_{i+1}\right]\right)$.
However, as $i=l-1$, we have $d\left(\gamma_{t_{i}},\gamma_{t_{i+1}}\right)\leqslant\frac{\delta_{\varepsilon}}{2}=d\left(\gamma_{t_{i-1}},\gamma_{t_{i}}\right)$
and hence $\gamma^{\mathcal{P}_{\varepsilon}}\left(\left[t_{i},t_{i+1}\right]\right)\subseteq\overleftarrow{\gamma^{\mathcal{P}_{\varepsilon}}}\left(\left[t_{i-1},t_{i}\right]\right)$.
In particular, $\gamma^{\mathcal{P}_{\varepsilon}}|_{\left[t_{i-1},t_{i}\right]}$
passes through $\gamma_{t_{i+1}}$. Therefore, $d\left(\gamma_{t_{i-1}},\gamma_{t_{i+1}}\right)\leqslant d\left(\gamma_{t_{i-1}},\gamma_{t_{i}}\right)$
which contradicts the construction of $\{t_{i}\}_{i=0}^{l}.$ 

On the other hand, if $\left[t_{i-1},t_{i}\right]$ and $\left[t_{j-1},t_{j}\right]$
($i<j$) are non-adjacent intervals and 
\[
\gamma^{\mathcal{P}_{\varepsilon}}|_{\left[t_{i-1},t_{i}\right]}\bigcap\gamma^{\mathcal{P}_{\varepsilon}}|_{\left[t_{j-1},t_{j}\right]}\neq\emptyset,
\]
then by Lemma \ref{geodesic fact 1} we know that at least one of
\[
d\left(\gamma_{t_{i-1}},\gamma_{t_{j-1}}\right),d\left(\gamma_{t_{i}},\gamma_{t_{j-1}}\right),d\left(\gamma_{t_{i-1}},\gamma_{t_{j}}\right),d\left(\gamma_{t_{i}},\gamma_{t_{j}}\right)
\]
is strictly less than $\frac{\delta_{\varepsilon}}{2}$. However,
this again contradicts the construction of $\left\{ t_{i}\right\} {}_{i=0}^{l}.$

Now the proof is complete.\end{proof}

The same technique of proof will allow us to prove our second main
result, which is concerned with simple piecewise geodesic approximations
of Jordan curves. This result significantly strengthens Theorem \ref{simple curve approximation}.
\begin{thm}
\label{Jordan curve approximation} Let $\gamma:\ \left[0,1\right]\rightarrow M$
be a Jordan curve. Assume that $0<\tau_{1}<\cdots<\tau_{k}<1$ are
$k$ fixed points in $\left[0,1\right].$ Then for any $\varepsilon>0,$
there exists a finite partition 
\[
\mathcal{P}_{\varepsilon}:\ 0=t_{0}<t_{1}<\cdots<t_{n-1}<t_{n}=1
\]
of $\left[0,1\right]$, such that 

(1) $\tau_{1},\cdots,\tau_{k}$ are partition points of $\mathcal{P}_{\varepsilon}$;

(2) $\left\Vert \mathcal{P}_{\varepsilon}\right\Vert <\varepsilon$;

(3) for $i=1,\cdots,n,$ $\gamma_{t_{i-1}}$ and $\gamma_{t_{i}}$
can be joined by a unique minimizing geodesic in $M$, and the piecewise
geodesic interpolation $\gamma^{\mathcal{P}_{\varepsilon}}$ of $\gamma$
over the partition points in $\mathcal{P}_{\varepsilon}$ is a  Jordan
curve.
\end{thm}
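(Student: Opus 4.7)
The strategy is to reduce the Jordan curve case to the non-closed simple curve case already handled by Theorem \ref{simple curve approximation}, applied separately on each sub-arc between consecutive prescribed points. Setting $\tau_0 = 0$ and $\tau_{k+1} = 1$, each restriction $\gamma|_{[\tau_{i-1}, \tau_i]}$ is a non-closed simple curve, because the Jordan curve $\gamma$ is injective on $[0, 1)$.

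First I would pick a single $\delta > 0$ serving uniformly across the whole curve: (i) Lemma \ref{minimizing geodesic} applies to any pair of points in $\gamma([0, 1])$ at Riemannian distance less than $\delta$, which will take care of the unique-geodesic part of condition (3) since all consecutive partition-image distances will be at most $\delta/2$; and (ii) viewing $\gamma$ as a homeomorphism from the quotient circle $S^{1} := [0,1]/(0 \sim 1)$ onto its image, uniform continuity of $\gamma^{-1}$ furnishes a modulus $\mu > 0$ with $d(\gamma_s, \gamma_t) < \delta$ forcing $d_{S^{1}}(s, t) < \mu$; I would additionally choose $\mu < \varepsilon$ and $\mu < \frac{1}{3}\min_{0 \leqslant i \leqslant k}(\tau_{i+1} - \tau_i)$. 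On each sub-arc $[\tau_{i-1}, \tau_i]$ I would then run the same inductive last-exit-time construction used in the proof of Theorem \ref{simple curve approximation} at ball radius $\delta/2$, restricted to the sub-arc. This procedure terminates at $\tau_i$ and produces a partition $\mathcal{P}^{(i)}$ of mesh smaller than $\varepsilon$ whose restricted piecewise geodesic interpolation is simple. Let $\mathcal{P}_\varepsilon = \bigcup_i \mathcal{P}^{(i)}$; it contains $\tau_1, \ldots, \tau_k$ and satisfies conditions (1) and (2).

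To establish the Jordan property of the global interpolation $\gamma^{\mathcal{P}_\varepsilon}$, closedness follows at once from $\gamma_0 = \gamma_1$, while for injectivity on $[0, 1)$ I need to rule out unwanted intersections of geodesic segments. Pairs within the same sub-arc are handled by Theorem \ref{simple curve approximation} applied to that sub-arc. Pairs whose sub-arcs are non-adjacent on the circle (share no $\tau_i$) have their endpoints separated on $S^{1}$ by circle-distance at least $\min_i(\tau_{i+1} - \tau_i) > \mu$; by (ii) every one of the four relevant endpoint distances strictly exceeds $\delta > \delta/2$, so Lemma \ref{geodesic fact 1} precludes intersection.

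The remaining and most delicate case is two segments lying in two sub-arcs adjacent on the circle, i.e., sharing some $\tau_i$. The two segments immediately flanking $\tau_i$ meet by design at $\gamma_{\tau_i}$, and any additional intersection between them is excluded by a direct adaptation of the ``adjacent intervals'' argument from the proof of Theorem \ref{simple curve approximation}, using uniqueness of minimizing geodesics and simplicity of $\gamma$. For other non-adjacent pairs within these two sub-arcs, I would apply Lemma \ref{geodesic fact 1} and deduce, via the within-sub-arc last-exit-time property, the interior-segment length equality $\delta/2$, and triangle inequalities based at $\gamma_{\tau_i}$, that each of the four endpoint distances is at least $\delta/2$. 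The main obstacle is that the terminal segment of a sub-arc (the one ending at $\tau_i$) can be strictly shorter than $\delta/2$, so these short segments could in principle cooperate with segments in the neighbouring sub-arc to drive endpoint distances below $\delta/2$. Certifying that this does not happen is the heart of the argument and may demand a further tightening of the relationship between $\delta$ and $\mu$ beyond what is sketched here.
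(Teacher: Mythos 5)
Your reduction to Theorem~\ref{simple curve approximation} is the right first instinct, and your handling of pairs in non-adjacent sub-arcs via the modulus $\mu$ is fine. The gap, however, is genuine and is located exactly where you admit difficulty, but it also affects the case you claim to have closed: segments flanking a prescribed point $\tau_i$ (including the wrap-around point $\gamma_0=\gamma_1$).

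Running the last-exit-time algorithm on $[\tau_{i-1},\tau_i]$ forces the final partition time to be $\tau_i$ itself, so the terminal segment has length $\leqslant\delta/2$ but not necessarily $=\delta/2$; symmetrically the first segment on $[\tau_i,\tau_{i+1}]$ has length exactly $\delta/2$. You assert that an overlap between these two segments is excluded ``by a direct adaptation of the adjacent-intervals argument.'' It is not: in the $i=l-1$ case of Theorem~\ref{simple curve approximation}, the contradiction is that $\gamma_{t_{i+1}}\in\overline{B}(\gamma_{t_{i-1}},\delta_\varepsilon/2)$ with $t_{i+1}>t_i$, contradicting the maximality of $t_i$ as an exit time from $\overline{B}(\gamma_{t_{i-1}},\cdot)$. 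Here $\tau_i$ is \emph{not} an exit time of any ball; it is a forced endpoint, and the two flanking segments were produced by two independent runs of the algorithm that cannot see each other. If $\gamma_{w_{m-1}}$ (the penultimate point before $\tau_i$) happens to lie on the geodesic from $\gamma_{\tau_i}$ to the first point of the next sub-arc, no step of either construction forbids this, and $\gamma^{\mathcal{P}_\varepsilon}$ doubles back. Tightening $\delta$ and $\mu$ cannot repair this, because the shortness of the terminal segment is an artifact of the forced endpoint, not of any metric scale.

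The paper resolves this with an additional structural idea that you have not supplied. Instead of running the algorithm all the way into $\tau_i$, it first fixes disjoint geodesically convex balls $U_i$ of radius $\delta_\varepsilon$ around each $\gamma_{\tau_i}$, takes the first entrance time $u_i$ and last exit time $v_i$ of $\gamma$ into $\overline{U_i}$, and inserts the two \emph{radial} geodesic segments $\overline{\gamma_{u_i}\gamma_{\tau_i}}$ and $\overline{\gamma_{\tau_i}\gamma_{v_i}}$, each of length exactly $\delta_\varepsilon$. The Theorem~\ref{simple curve approximation} algorithm is then run only on the pieces $\gamma|_{[v_{i-1},u_i]}$, whose images lie entirely outside $\bigcup_j\overline{U_j}$, at a \emph{strictly smaller} scale $\delta_\varepsilon^{(i)}<\delta_\varepsilon$. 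This buys two things your proof lacks: (a) the radial segment out of $\gamma_{u_i}$ has length $\delta_\varepsilon$, strictly larger than the terminal segment length $\leqslant\delta_\varepsilon^{(i)}$ of the outside piece, and any overlap would force a point of the outside piece into the interior of $\overline{U_i}$, which is impossible by construction; (b) Lemma~\ref{geodesic fact 2} applies verbatim to show that no interpolation segment of an outside piece can cross a radial segment at all. That lemma is precisely the tool designed to certify the separation you identify as ``the heart of the argument,'' and your proof does not invoke it or anything playing its role.
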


The proof of Theorem \ref{Jordan curve approximation} relies on the
following geometric fact. It is illustrated by Figure 4.

\begin{figure}
\begin{center}

\includegraphics[scale=0.22]{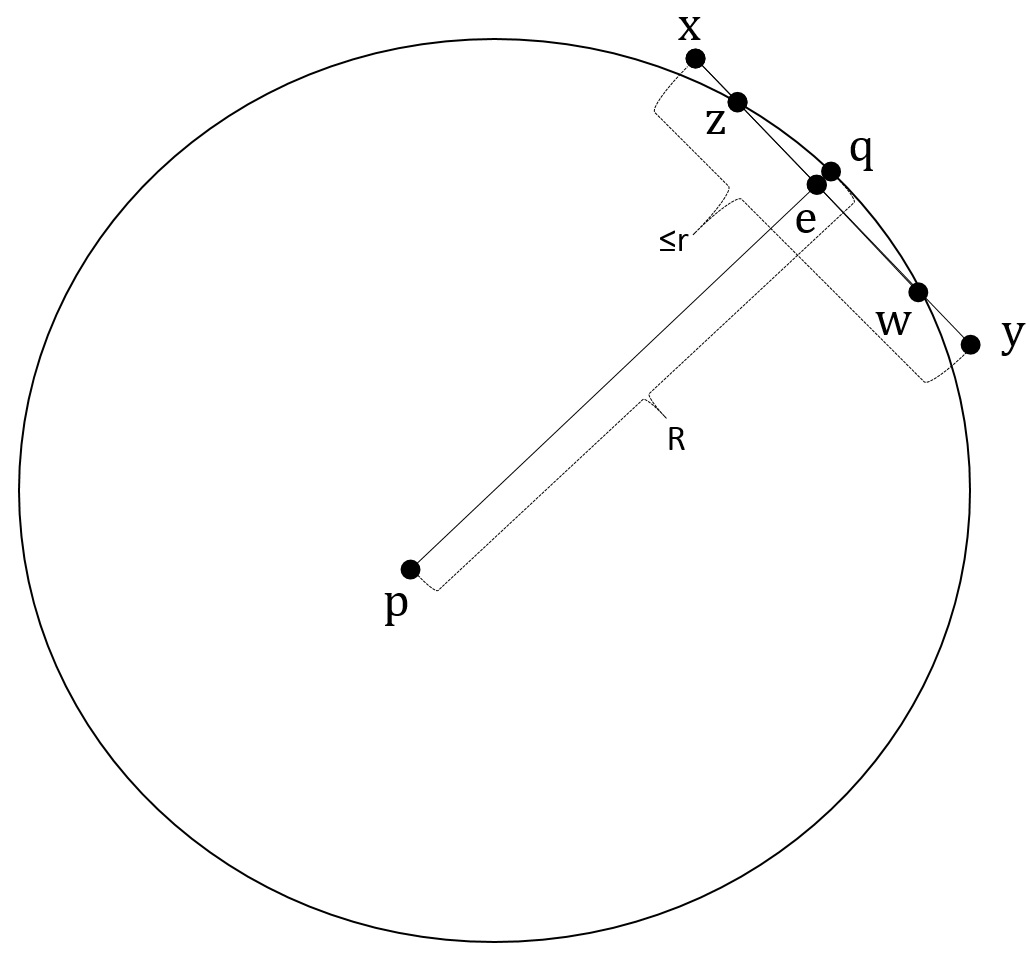}
\caption{This figure illustrates the relative positions of the points involved
in Lemma \ref{geodesic fact 2}.}

\end{center}
\end{figure}

\begin{lem}
\label{geodesic fact 2} Let $B\left(p,R\right)$ be a geodesically
convex normal ball centered at $p\in M$, and let $q\in\partial B\left(p,R\right).$
Assume that $x,y\in\overline{B}\left(p,R\right){}^{c}$ and there
exists a minimizing geodesic $\alpha:\ \left[0,1\right]\rightarrow M$
joining $x$ and $y.$ If $\alpha\left(\left[0,1\right]\right)\bigcap\overline{pq}\neq\emptyset$
and $d\left(x,y\right)\leqslant r$ for some $0<r<R,$ where $\overline{pq}$
denotes the image of the unique minimizing geodesic in $M$ joining
$p$ and $q$, then
\[
d\left(x,q\right)<r,\ d\left(y,q\right)<r.
\]
\end{lem}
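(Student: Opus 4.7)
The plan is to pick any intersection point $z \in \alpha([0,1]) \cap \overline{pq}$, say $z = \alpha(s)$, and exploit two facts: (i) $z$ lies on a minimizing geodesic from $p$ to $q$, so $d(p,z) + d(z,q) = d(p,q) = R$, and (ii) $\alpha$ is itself a minimizing geodesic, so $d(x,z) + d(z,y) = d(x,y) \leq r$. The goal is then to compare $d(x,z)$ with $d(z,q)$ (and symmetrically for $y$) and feed this back into the triangle inequality for $d(x,q)$.

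First I would use the hypothesis $x,y \in \overline{B}(p,R)^{c}$, which gives $d(p,x) > R$ and $d(p,y) > R$ strictly. Combined with the triangle inequality
\[
d(z,x) \geq d(p,x) - d(p,z) > R - d(p,z) = d(z,q),
\]
and the analogous estimate $d(z,y) > d(z,q)$. These strict inequalities are the engine of the argument, and they rely crucially on $x,y$ lying strictly outside the closed ball rather than merely outside the open ball.

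Next I would combine these with the minimality of $\alpha$. Since $d(x,z) + d(z,y) \leq r$ and $d(z,y) > d(z,q)$, we get $d(x,z) < r - d(z,q)$, i.e.\ $d(x,z) + d(z,q) < r$. Then the triangle inequality yields
\[
d(x,q) \leq d(x,z) + d(z,q) < r,
\]
and the same argument with the roles of $x$ and $y$ swapped gives $d(y,q) < r$. The hypothesis that $B(p,R)$ is a geodesically convex normal ball enters only to guarantee that the minimizing geodesic $\overline{pq}$ from $p$ to $q$ is unique and well-defined, so that ``$\alpha$ hits $\overline{pq}$'' is a meaningful statement; the chain of triangle inequalities does the rest.

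I do not anticipate a real obstacle here; the only subtle point is ensuring the strictness. If one only had $x,y \in B(p,R)^{c}$ (so possibly $d(p,x) = R$), the above inequalities would only be weak and one would get $d(x,q) \leq r$, not $d(x,q) < r$. So the proof hinges on recognizing that the hypothesis $x,y \in \overline{B}(p,R)^{c}$ gives strict separation from the ball, and that this strictness propagates through each step. No curvature, exponential map, or first variation input is needed beyond the geodesic convexity used to define $\overline{pq}$.
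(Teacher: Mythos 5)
Your proof is correct, and it is genuinely different from — and cleaner than — the one in the paper. The paper's argument first disposes of the case $q\in\alpha([0,1])$, then takes the (unique, by geodesic convexity) intersection point $e=\alpha(t)$ inside $B(p,R)$, introduces the entry and exit points $z=\alpha(u)$ and $w=\alpha(v)$ of $\alpha$ on $\partial B(p,R)$, and runs a six-step chain
\[
d(x,q)\leqslant d(x,e)+d(e,q)=d(x,e)+d(p,q)-d(p,e)=d(x,e)+d(p,w)-d(p,e)\leqslant d(x,e)+d(e,w)=d(x,w)<d(x,y)\leqslant r,
\]
with strictness supplied by $w\neq y$ (which in turn needs $y\notin\overline{B}(p,R)$). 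Your argument instead picks an arbitrary intersection point $z\in\alpha([0,1])\cap\overline{pq}$, derives the key strict inequality $d(z,y)>d(z,q)$ directly from $d(p,y)>R$ and $d(p,z)+d(z,q)=R$ via the reverse triangle inequality, and feeds it into the additivity $d(x,z)+d(z,y)=d(x,y)$. This avoids the case split, avoids introducing the auxiliary boundary points $z,w$, and does not require uniqueness of the intersection point, so it leans on geodesic convexity only to the extent of making $\overline{pq}$ well-defined. Both proofs use the strict exteriority of $x,y$ in a symmetric way, and you correctly flagged that this is where the strict inequality is earned; with merely $x,y\in B(p,R)^{c}$ the conclusion would degrade to $d(x,q)\leqslant r$.
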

\begin{proof}
The conclusion is obvious if $q\in\alpha\left(\left[0,1\right]\right).$
Otherwise, let $t\in\left(0,1\right)$ be the unique time such that
$e:=\alpha\left(t\right)\in B\left(p,R\right)$ is the intersection
point of $\alpha\left(\left[0,1\right]\right)$ and $\overline{pq}$.
By using the fact that $B\left(p,R\right)$ is a geodesically convex
normal ball, it is easy to show that there exists a unique $u\in\left(0,t\right)$
and a unique $v\in\left(t,1\right),$ such that $z:=\alpha\left(u\right)$
and $w:=\alpha\left(v\right)$ lie on $\partial B\left(p,R\right).$
Observe that $e$ and $q$ are distinct, for their equality would
contradict the fact that $r<R.$ Now it follows from properties of
minimizing geodesics that
\begin{eqnarray*}
d\left(x,q\right) & \leqslant & d\left(x,e\right)+d\left(e,q\right)\\
 & = & d\left(x,e\right)+d\left(p,q\right)-d\left(p,e\right)\\
 & = & d\left(x,e\right)+d\left(p,w\right)-d\left(p,e\right)\\
 & \leqslant & d\left(x,e\right)+d\left(e,w\right)\\
 & = & d\left(x,w\right)\\
 & < & d\left(x,y\right)\\
 & \leqslant & r.
\end{eqnarray*}
Similarly, we have $d\left(y,q\right)<r.$
\end{proof}

Now we can prove Theorem \ref{Jordan curve approximation}. Our proof
is constructive and the idea is as follows. Recall that the times
$\tau_{1},\ldots,\tau_{k}$ should to included in our partition. Firstly,
We find small disjoint geodesic balls around the points $\gamma_{\tau_{i}},\ldots,\gamma_{\tau_{k}},\gamma_{1}$.
Secondly, we connect each point $\gamma_{\tau_{i}}$ by two radial
minimizing geodesics to the point where $\gamma$ \textit{first} enters
the geodesic ball around $\gamma_{\tau_{i}}$ before time $\tau_{i}$
and to the point where $\gamma$ \textit{last} exists the geodesic
ball. Finally, we construct simple piecewise geodesic interpolation
for each piece of simple curves outside those geodesic balls inductively,
by using the algorithm in Theorem \ref{simple curve approximation}.
To make sure that those approximation curves do not intersect the
geodesic segments inside those geodesic balls, we need to use Lemma
\ref{geodesic fact 2}. Figure 5 illustrates the idea when $k=2.$

\begin{proof}[Proof of Theorem 2.2]Take an arbitrary $\tau\in\left(0,\tau_{1}\right).$
Since $\gamma$ is a Jordan curve, we know that $\gamma_{\tau},\gamma_{\tau_{1}},\cdots,\gamma_{\tau_{k}},\gamma_{\tau_{k+1}}\in M$
are all distinct, where we set $\tau_{k+1}=1$. By the Hausdorff property,
there exists some $\delta>0$ such that the closed metric balls $\overline{B}\left(\gamma_{\tau_{1}},\delta\right),\cdots,\overline{B}\left(\gamma_{\tau_{k+1}},\delta\right)$
are all disjoint and $\gamma_{\tau}\notin\bigcup_{i=1}^{k+1}\overline{B}\left(\gamma_{\tau_{i}},\delta\right)$. 

For the moment, by periodic extension and restriction we regard $\gamma$
as defined on $\left[\tau,\tau+1\right]$ with starting and end points
being $\gamma\left(\tau\right).$ 

Now fix $\varepsilon>0.$ Without loss of generality we assume that
\[
\varepsilon<\mbox{min}\left\{ \tau,\tau_{1}-\tau,\frac{\tau_{2}-\tau_{1}}{2},\cdots,\frac{\tau_{k+1}-\tau_{k}}{2}\right\} .
\]

First of all, by the uniform continuity of $\gamma|_{\left[\tau,\tau_{i}\right]}^{-1}$
and $\gamma|_{\left[\tau_{i},\tau+1\right]}^{-1}$, there exists some
$\delta_{\varepsilon}>0,$ such that for all $i=1,\cdots,k+1,$ any
$s,t\in\left[\tau,\tau_{i}\right]$ or $s,t\in\left[\tau_{i},\tau+1\right],$
\[
d\left(\gamma_{s},\gamma_{t}\right)<\delta_{\varepsilon}\implies\left|t-s\right|<\varepsilon.
\]
Now set $U_{i}=B\left(\gamma_{\tau_{i}},\delta_{\varepsilon}\right)$.
Here we assume that $\delta_{\varepsilon}$ is small enough so that
each $U_{i}$ is a geodesically convex normal ball and Lemma \ref{minimizing geodesic}
holds for those $\gamma_{s},\gamma_{t}$ with $d\left(\gamma_{s},\gamma_{t}\right)<2\delta_{\varepsilon}.$
Define 
\begin{eqnarray*}
u_{i} & = & \inf\left\{ t\in\left[\tau,\tau_{i}\right]:\ \gamma_{t}\in\overline{U_{i}}\right\} ,\\
v_{i} & = & \sup\left\{ t\in\left[\tau_{i},\tau+1\right]:\ \gamma_{t}\in\overline{U_{i}}\right\} .
\end{eqnarray*}

\begin{figure}
\begin{center}

\includegraphics[scale=0.2]{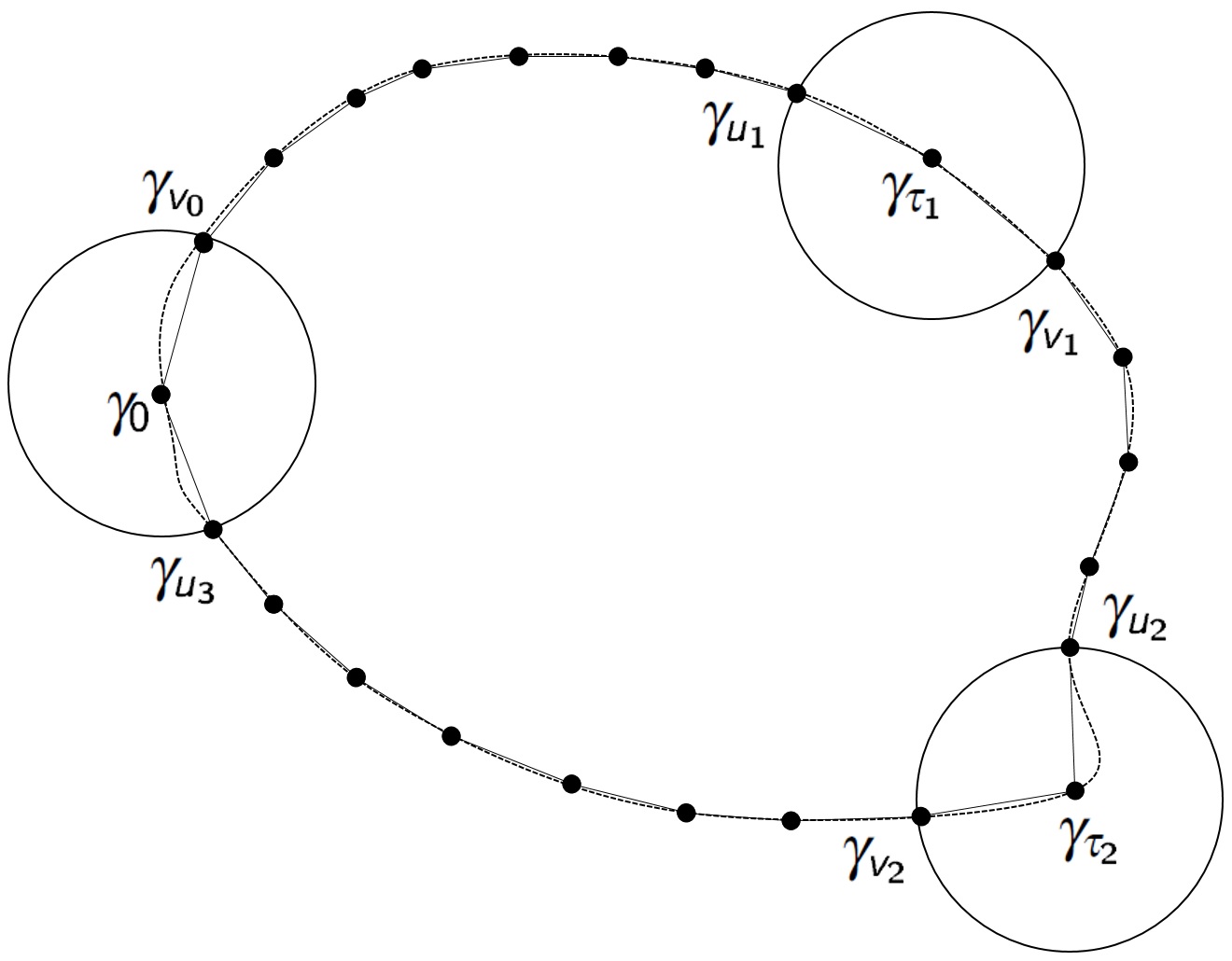}
\caption{This figure illustrates the idea of proving of Theorem 2.2 when $k=2.$
The dotted line represents the curve $\gamma$. The solid line represents
the piecewise geodesic interpolation of $\gamma$.}

\end{center}
\end{figure}

To return to the original time interval $\left[0,1\right],$ let $v_{0}=v_{k+1}-1.$
We have $|v_{0}|<\varepsilon$, $\left|\tau_{i}-u_{i}\right|<\varepsilon$,
$\left|v_{i}-\tau_{i}\right|<\varepsilon$ and
\[
0<v_{0}<u_{1}<\tau_{1}<v_{1}<\cdots<u_{k}<\tau_{k}<v_{k}<u_{k+1}<1,
\]

and 
\[
\gamma_{u_{i}}\neq\gamma_{v_{i}},\ d\left(\gamma_{\tau_{i}},\gamma_{u_{i}}\right)=d\left(\gamma_{\tau_{i}},\gamma_{v_{i}}\right)=\delta_{\varepsilon}.
\]
Moreover, we have 
\[
\gamma|_{\left(v_{0},u_{1}\right)\cup\left(v_{1},u_{2}\right)\cup\cdots\cup\left(v_{k-1},u_{k}\right)\cup\left(v_{k},u_{k+1}\right)}\bigcap\left(\bigcup_{i=1}^{k+1}\overline{U_{i}}\right)=\emptyset.
\]

We will take $v_{0},u_{1},\tau_{1},v_{1},\cdots,u_{k},\tau_{k},v_{k},u_{k+1}$
as part of the partition points in $\mathcal{P}_{\varepsilon}$. In
particular, $v_{0}$ will be the first point, $u_{k+1}$ will be the
last point (except $0$ and $1$), and $u_{i},\tau_{i},v_{i}$ will
be successive points in $\mathcal{P}_{\varepsilon},$ so the piecewise
geodesic interpolation of $\gamma$ over those small intervals is
a finite sequence of radial geodesics of the balls centered at $\gamma_{\tau_{i}}$
with radius $\delta_{\varepsilon}$ for $i=1,\cdots,k+1$.

For the next step, notice that $\gamma|_{\left[v_{0},u_{1}\right]},\gamma|_{\left[v_{1},u_{2}\right]},\cdots,\gamma|_{\left[v_{k},u_{k+1}\right]}$
are $k+1$ non-closed simple curves with disjoint images. We are going
to use the constructive procedure in the proof of Theorem \ref{simple curve approximation}
to define a simple piecewise geodesic approximation of each $\gamma|_{\left[v_{i-1},u_{i}\right]}$
($i=1,\cdots,k+1$) with partition size smaller than $\varepsilon$
inductively, such that the resulting piecewise geodesic closed curve
over $[0,1]$ is simple. That will complete the proof of the theorem.

Let $\gamma^{\left(0\right)}$ be the Jordan curve such that 
\[
\gamma^{(0)}=\gamma,\ \mbox{on}\ \left[v_{0},u_{1}\right]\cup\left[v_{1},u_{2}\right]\cup\cdots\cup\left[v_{k-1},u_{k}\right]\cup\left[v_{k},u_{k+1}\right],
\]
and it is the minimizing geodesic (radial segment of the corresponding
normal ball) on each small interval of 
\[
\left[0,v_{0}\right],\left[u_{1},\tau_{1}\right],\left[\tau_{1},v_{1}\right],\cdots,\left[u_{k},\tau_{k}\right],\left[\tau_{k},v_{k}\right],\left[u_{k+1},1\right].
\]

By the construction in the proof of Theorem \ref{simple curve approximation},
we may find a partition 
\[
\mathcal{P}_{\left[v_{0},u_{1}\right]}^{\left(1\right)}:\ v_{0}=w_{0}^{\left(1\right)}<w_{1}^{\left(1\right)}<\cdots<w_{l_{1}-1}^{\left(1\right)}<w_{l_{1}}^{\left(1\right)}=u_{1}
\]
so that $\left\Vert \mathcal{P}_{[v_{0},u_{1}]}^{(1)}\right\Vert <\varepsilon$,
the geodesic interpolation $\gamma^{\mathcal{P}_{\left[v_{0},u_{1}\right]}^{\left(1\right)}}$
of $\gamma|_{\left[v_{0},u_{1}\right]}$ over the partition points
in $\mathcal{P}_{\left[v_{0},u_{1}\right]}^{\left(1\right)}$ is simple
and 
\begin{eqnarray*}
d\left(\gamma_{w_{i-1}^{(1)}},\gamma_{w_{i}^{(1)}}\right) & = & \delta{}_{\varepsilon}^{\left(1\right)},\ i=1,\cdots l_{1}-1,\\
d\left(\gamma_{w_{l_{1}-1}^{(1)}},\gamma_{u_{1}}\right) & \leqslant & \delta{}_{\varepsilon}^{\left(1\right)},
\end{eqnarray*}
for some $\delta_{\varepsilon}^{\left(1\right)}>0$. 

Moreover, we may choose $\delta_{\varepsilon}^{\left(1\right)}$small
enough so that dist$\left(\gamma{}^{\mathcal{P}_{\left[v_{0}u_{1}\right]}^{\left(1\right)}},\gamma^{\left(0\right)}|_{\left[\tau_{1},1\right]}\right)>0$
and $\delta_{\varepsilon}^{\left(1\right)}<\delta_{\varepsilon}$.

Now we will show that 
\[
\gamma^{\mathcal{P}_{\left[v_{0},u_{1}\right]}^{\left(1\right)}}\bigcap\gamma^{\left(0\right)}|_{\left[0,v_{0}\right)\cup\left(u_{1},\tau_{1}\right]}=\emptyset.
\]
In fact, if $\gamma^{\mathcal{P}_{\left[v_{0},u_{1}\right]}^{\left(1\right)}}\bigcap\gamma^{\left(0\right)}|_{\left[0,v_{0}\right)}\neq\emptyset,$
then from the construction of $\left\{ w_{i}^{\left(1\right)}\right\} $,
there exists some $i\geqslant2,$ such that $\gamma_{w_{i-1}^{(1)}},\gamma_{w_{i}^{(1)}}\in\overline{U_{k+1}}^{c}$
and 
\[
\overline{\gamma_{w_{i-1}^{\left(1\right)}}\gamma_{w_{i}^{\left(1\right)}}}\bigcap\gamma^{\left(0\right)}|_{\left[0,v_{0}\right)}\neq\emptyset,
\]
where $\overline{\gamma_{w_{i-1}^{\left(1\right)}}\gamma_{w_{i}^{\left(1\right)}}}$
denotes the image of the unique minimizing geodesic joining $\gamma_{w_{i-1}^{\left(1\right)}}$
and $\gamma_{w_{i}^{\left(1\right)}}$. However, since $d\left(\gamma_{w_{i-1}^{(1)}},\gamma_{w_{i}^{(1)}}\right)\leqslant\delta_{\varepsilon}^{\left(1\right)}<\delta_{\varepsilon}$,
we know from Lemma \ref{geodesic fact 2} that 
\[
d\left(\gamma_{v_{0}},\gamma_{w_{i-1}^{(1)}}\right)<\delta_{\varepsilon}^{\left(1\right)},\ d\left(\gamma_{v_{0}},\gamma_{w_{i}^{(1)}}\right)<\delta_{\varepsilon}^{\left(1\right)},
\]
which is an obvious contradiction to the construction of $\left\{ w_{i}^{\left(1\right)}\right\} {}_{i=0}^{l_{1}}$.
On the other hand, if $\gamma^{\mathcal{P}_{\left[v_{0},u_{1}\right]}^{\left(1\right)}}\bigcap\gamma^{\left(0\right)}|_{\left(u_{1},\tau_{1}\right]}\neq\emptyset,$
then there exists some $i\leqslant l_{1}-1,$ such that $\overline{\gamma_{w_{i-1}^{\left(1\right)}}\gamma_{w_{i}^{\left(1\right)}}}\bigcap\gamma^{\left(0\right)}|_{\left(u_{1},\tau_{1}\right]}\neq\emptyset.$

Since $\gamma_{w_{i-1}^{\left(1\right)}},\gamma_{w_{i}^{\left(1\right)}}\in\overline{U_{1}}^{c}$
and $d\left(\gamma_{w_{i-1}^{(1)}},\gamma_{w_{i}^{(1)}}\right)=\delta_{\varepsilon}^{\left(1\right)}<\delta_{\varepsilon}$,
we know again from Lemma \ref{geodesic fact 2} that 
\[
d\left(\gamma_{u_{1}},\gamma_{w_{i-1}^{(1)}}\right)<\delta_{\varepsilon}^{\left(1\right)},\ d\left(\gamma_{u_{1}},\gamma_{w_{i}^{(1)}}\right)<\delta_{\varepsilon}^{\left(1\right)}.
\]
But this is also a contradiction to the construction of $\left\{ w_{i}^{(1)}\right\} {}_{i=0}^{l_{1}}.$

Therefore, the closed curve $\gamma^{\left(1\right)}$ over $\left[0,1\right]$
defined by 
\[
\gamma_{t}^{\left(1\right)}=\begin{cases}
\gamma_{t}^{\mathcal{P}_{\left[v_{0},u_{1}\right]}^{\left(1\right)}}, & t\in\left[v_{0},u_{1}\right];\\
\gamma_{t}^{\left(0\right)}, & t\in\left[0,1\right]\backslash\left[v_{0},u_{1}\right],
\end{cases}
\]
is a Jordan curve.

Now consider $\gamma|_{\left[v_{1},u_{2}\right]}.$ The previous argument
can be carried through easily with respect to the Jordan curve $\gamma^{\left(1\right)}$,
and we obtain a finite partition 
\[
\mathcal{P}_{\left[v_{1},u_{2}\right]}^{\left(2\right)}:\ v_{1}=w_{0}^{\left(2\right)}<w_{1}^{\left(2\right)}<\cdots<w_{l_{2}-1}^{\left(2\right)}<w_{l_{2}}^{\left(2\right)}=u_{2},
\]
such that $\left\Vert \mathcal{P}_{\left[v_{1},u_{2}\right]}^{\left(2\right)}\right\Vert <\varepsilon,$
and the closed curve $\gamma^{\left(2\right)}$ over $\left[0,1\right]$
defined by
\[
\gamma_{t}^{\left(2\right)}=\begin{cases}
\gamma_{t}^{\mathcal{P}_{\left[v_{1},u_{2}\right]}^{\left(2\right)}}, & t\in\left[v_{1},u_{2}\right];\\
\gamma_{t}^{\left(1\right)}, & t\in\left[0,1\right]\backslash\left[v_{1},u_{2}\right],
\end{cases}
\]
is a Jordan curve, where $\gamma^{\mathcal{P}_{\left[v_{1},u_{2}\right]}^{\left(2\right)}}$
is the geodesic interpolation of $\gamma|_{\left[v_{1},u_{2}\right]}$
over the partition points in $\mathcal{P}_{\left[v_{1},u_{2}\right]}^{\left(2\right)}.$
By induction, we are able to construct simple piecewise geodesic approximation
of each piece of $\gamma$ outside $\cup_{i=1}^{k+1}\overline{U_{i}}$
and finally obtain a finite partition $\mathcal{P}_{\varepsilon}$
of $\left[0,1\right]$ with partition points 
\[
\left\{ 0\right\} \bigcup\left(\bigcup_{i=1}^{k+1}\left\{ v_{i-1},w_{1}^{\left(i\right)},\cdots,w_{l_{i}-1}^{\left(i\right)},u_{i},\tau_{i}\right\} ,\right)
\]
such that $\left\Vert \mathcal{P}_{\varepsilon}\right\Vert <\varepsilon,$
and the geodesic interpolation $\gamma^{\mathcal{P}_{\varepsilon}}$
(which is $\gamma^{\left(k+1\right)}$ by induction) of $\gamma$
over the points of $\mathcal{P}_{\varepsilon}$ is a Jordan curve. 

Now the proof is complete.\end{proof}

\begin{rem}
By slight modification of the proof, it is not hard to see that Theorem
\ref{Jordan curve approximation} also holds for  non-closed simple
curves. In this case, it strengthens the result of Theorem \ref{simple curve approximation}.
\end{rem}

\begin{rem}
It is possible to generalize our main results to infinite dimensional
spaces with suitable geodesic properties. For technical simplicity
we are not going to present the details.
\end{rem}

\section{Applications}

In this section, we shall demonstrate two applications of Theorem
\ref{Jordan curve approximation}. Here we assume that $M=\mathbb{R}^{2}.$

\subsection{Green's theorem for Jordan curves with finite $p$-variation ($1\leqslant p<2$)}

We will prove a generalized Green's theorem for planar Jordan curves
with finite $p$-variation, where $1\leqslant p<2$. First we shall
briefly recall basic facts about Young's integration. 
\begin{defn}
Let $\left(X,d\right)$ be a metric space. We say a function $\gamma:\left[0,1\right]\rightarrow X$
has finite $p$-variation if 
\[
\left\Vert \gamma\right\Vert _{p}:=\left(\sup_{\mathcal{P}}\sum_{t_{0}<t_{1}<\ldots<t_{n}}d\left(\gamma_{t_{i}},\gamma_{t_{i+1}}\right)^{p}\right)^{\frac{1}{p}}<\infty,
\]
where the supremum is taken over all finite partitions of $[0,1].$\end{defn}
\begin{thm}
(L.C. Young, \cite{Young}, see also \cite{St-flours}, Theorem 1.16)\label{Continuity theorem of Young integration}
Let $p,q\geqslant1$ be such that $\frac{1}{p}+\frac{1}{q}>1$. Let
$\gamma,\tilde{\gamma}:\left[0,1\right]\rightarrow\mathbb{R}^{d}$
be two continuous paths with finite $p$-variation and $q$-variation
respectively. Then the following limit exists: 
\[
\int_{0}^{1}\gamma\otimes\mathrm{d}\tilde{\gamma}:=\mbox{lim}_{\left|\mathcal{P}\right|\rightarrow0}\sum_{\mathcal{P}:\ 0=t_{0}<\ldots<t_{n}=1}\gamma_{t_{i}}\otimes\left(\tilde{\gamma}_{t_{i+1}}-\tilde{\gamma}_{t_{i}}\right).
\]
Moreover, the function $\int_{0}^{\cdot}\gamma\otimes\mathrm{d}\tilde{\gamma}$
has finite $q$-variation and 
\[
\left\Vert \int_{0}^{\cdot}\gamma\otimes\mathrm{d}\tilde{\gamma}\right\Vert _{q}\leqslant2\zeta\left(\frac{1}{p}+\frac{1}{q}\right)\left\Vert \gamma\right\Vert _{p}\left\Vert \tilde{\gamma}\right\Vert _{q},
\]
where $\zeta\left(\cdot\right)$ is the classical Riemann zeta function. 
\end{thm}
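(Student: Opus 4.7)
The plan is to follow the classical Love--Young approach, which proceeds in two movements: first establish a quantitative maximal inequality for Riemann sums over an arbitrary partition, then use it via a refinement argument to obtain existence of the integral and the $q$-variation bound simultaneously.

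For the first step, consider the Riemann sum $S(\mathcal{P}) := \sum_i \gamma_{t_i} \otimes (\tilde{\gamma}_{t_{i+1}} - \tilde{\gamma}_{t_i})$ associated with a partition $\mathcal{P} = \{0 = t_0 < \cdots < t_N = 1\}$. The key observation is the point-removal identity: deleting an interior point $t_k$ changes $S$ by exactly $(\gamma_{t_{k-1}} - \gamma_{t_k}) \otimes (\tilde{\gamma}_{t_{k+1}} - \tilde{\gamma}_{t_k})$, of norm at most $|\gamma_{t_k} - \gamma_{t_{k-1}}| \cdot |\tilde{\gamma}_{t_{k+1}} - \tilde{\gamma}_{t_k}|$. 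Setting $1/r := 1/p + 1/q > 1$, H\"older's inequality applied to this product over the $N-1$ interior indices yields
\[
\sum_{k=1}^{N-1} \bigl(|\gamma_{t_k} - \gamma_{t_{k-1}}|\cdot|\tilde{\gamma}_{t_{k+1}} - \tilde{\gamma}_{t_k}|\bigr)^r \leq \|\gamma\|_p^r \|\tilde{\gamma}\|_q^r,
\]
so by averaging there exists an interior index at which the product is bounded by $\|\gamma\|_p \|\tilde{\gamma}\|_q / (N-1)^{1/r}$. Iterating the removal, always selecting the cheapest available point, down to the trivial partition $\{0,1\}$ telescopes the total error to at most $\|\gamma\|_p \|\tilde{\gamma}\|_q \sum_{n=1}^{N-1} n^{-1/r} \leq \zeta(1/p+1/q)\,\|\gamma\|_p \|\tilde{\gamma}\|_q$. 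This gives the Love--Young maximal inequality
\[
\bigl|S(\mathcal{P}) - \gamma_0 \otimes (\tilde{\gamma}_1 - \tilde{\gamma}_0)\bigr| \leq \zeta(1/p + 1/q)\,\|\gamma\|_p \|\tilde{\gamma}\|_q.
\]

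For the second step, apply this maximal inequality on every subinterval of a partition. For refinements $\mathcal{P}' \supseteq \mathcal{P}$, summing the estimate over the intervals of $\mathcal{P}$ gives $|S(\mathcal{P}') - S(\mathcal{P})| \leq \zeta(1/p+1/q)\sum_i \|\gamma\|_{p;[t_i,t_{i+1}]} \|\tilde{\gamma}\|_{q;[t_i,t_{i+1}]}$. Since $\gamma$ and $\tilde{\gamma}$ are continuous with finite $p$- and $q$-variation, the associated controls $\omega_\gamma(s,t) := \|\gamma\|_{p;[s,t]}^p$ and $\omega_{\tilde{\gamma}}(s,t) := \|\tilde{\gamma}\|_{q;[s,t]}^q$ are uniformly continuous on the diagonal, and a mild H\"older argument combining their superadditivity with the vanishing of $\max_i \omega_{\tilde{\gamma}}(t_i, t_{i+1})$ shows the sum tends to zero as $\|\mathcal{P}\| \to 0$. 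Thus the Riemann sums are Cauchy along mesh-shrinking sequences and the integral exists as their common limit. Applying the maximal inequality on an arbitrary subinterval yields $|I_{s,t} - \gamma_s \otimes (\tilde{\gamma}_t - \tilde{\gamma}_s)| \leq \zeta(1/p+1/q)\|\gamma\|_{p;[s,t]} \|\tilde{\gamma}\|_{q;[s,t]}$, hence $|I_{s,t}| \leq |\gamma_s|\,|\tilde{\gamma}_t - \tilde{\gamma}_s| + \zeta\|\gamma\|_{p;[s,t]} \|\tilde{\gamma}\|_{q;[s,t]}$. Taking $q$-th powers, summing over any partition and using superadditivity of the two controls together with $\sup_s|\gamma_s| \leq |\gamma_0| + \|\gamma\|_p$, one obtains the stated $q$-variation bound, the factor $2$ absorbing the initial-value piece $\gamma_s \otimes (\tilde{\gamma}_t - \tilde{\gamma}_s)$.

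The main obstacle is the pigeonhole/averaging step in the point-removal argument: one has to identify the correct H\"older partner $r = (1/p+1/q)^{-1}$ so that the sum of $r$-th powers is controlled by $\|\gamma\|_p^r\|\tilde{\gamma}\|_q^r$ \emph{and} the resulting series $\sum n^{-1/r}$ converges; this is precisely where the hypothesis $1/p + 1/q > 1$ is indispensable, as any larger exponent would make the series diverge and destroy the constant $\zeta(1/p+1/q)$. Once this sharp one-step estimate is in place, the remaining Cauchy and $q$-variation arguments are standard manipulations of superadditive controls.
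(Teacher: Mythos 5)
The paper does not prove this theorem; it is quoted verbatim as a known result of Young, with a pointer to Lyons--Caruana--L\'evy \cite{St-flours}, Theorem 1.16. There is therefore no proof in the paper to compare your attempt against. That said, your argument is the standard Love--Young proof, which is precisely what the cited reference carries out: the point-removal identity for Riemann sums, H\"older's inequality with the partner exponent $r=(1/p+1/q)^{-1}$ followed by a pigeonhole step to locate a cheap removable point, greedy iterated removal telescoping to the maximal inequality with constant $\zeta(1/p+1/q)$, a Cauchy-along-refinements argument for existence driven by superadditivity of the controls, and finally the local maximal inequality on $[s,t]$ together with Minkowski and superadditivity to produce the $q$-variation bound. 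All of these steps are sound, and the role of the hypothesis $1/p+1/q>1$ is identified exactly where it matters. One point you should flag explicitly: the final bound $\left\Vert \int_{0}^{\cdot}\gamma\otimes\mathrm{d}\tilde{\gamma}\right\Vert_{q}\leqslant 2\zeta\left(\tfrac{1}{p}+\tfrac{1}{q}\right)\left\Vert\gamma\right\Vert_{p}\left\Vert\tilde{\gamma}\right\Vert_{q}$, as you derive it, requires the normalization $\gamma_{0}=0$, so that $\sup_{s}\left|\gamma_{s}\right|\leqslant\left\Vert\gamma\right\Vert_{p}$ and the boundary term $\gamma_{s}\otimes(\tilde\gamma_{t}-\tilde\gamma_{s})$ is absorbed by the extra factor, using $\zeta(1/p+1/q)\geqslant\zeta(2)>1$. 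If $\gamma_{0}\neq 0$ the stated right-hand side cannot control this boundary term (take $\gamma$ constant and nonzero). This is really an imprecision in the theorem statement as copied rather than a flaw in your approach, but your write-up should either impose $\gamma_{0}=0$ or replace $\left\Vert\gamma\right\Vert_{p}$ by $\left\Vert\gamma\right\Vert_{\infty}+\left\Vert\gamma\right\Vert_{p}$ in the conclusion.
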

The following lemma demonstrates the importance of piecewise linear
approximation for the $p$-variation metric. 
\begin{lem}
\label{p variation norm of piecewise linear interpolation}Let $\gamma:\left[0,1\right]\rightarrow\mathbb{R}^{d}$
be a path with finite $p$-variation, where $p\geqslant1$. Let $f:\mathbb{R}^{d}\rightarrow\mathbb{R}^{\tilde{d}}$
be a Lipschitz function with Lipschitz constant $C$. Then 

1. (\cite{St-flours}, Lemma 1.12 and Lemma 1.18) $\left\Vert f\left(\gamma\right)\right\Vert _{p}\leqslant C\left\Vert \gamma\right\Vert _{p}$. 

2. (\cite{St-flours}, Proposition 1.14 and Remark 1.19) For all $q>p$,

\[
\left\Vert f\left(\gamma\right)-f\left(\gamma^{\mathcal{P}}\right)\right\Vert _{q}\rightarrow0
\]
as $\left\Vert \mathcal{P}\right\Vert \rightarrow0$. 
\end{lem}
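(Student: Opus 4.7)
Both parts are standard facts about $p$-variation, and I would prove them as follows.

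Part 1 is immediate from the Lipschitz bound applied partitionwise. For any finite partition $0=t_0<\cdots<t_n=1$, the termwise inequality $|f(\gamma_{t_{i+1}})-f(\gamma_{t_i})|^p \leq C^p |\gamma_{t_{i+1}}-\gamma_{t_i}|^p$ holds; summing, taking the supremum over partitions, and extracting $p$-th roots gives $\|f(\gamma)\|_p \leq C\|\gamma\|_p$.

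For part 2 the plan is a standard interpolation argument combining three ingredients. First, I would show that piecewise linear interpolation does not inflate $p$-variation: $\|\gamma^{\mathcal{P}}\|_p \leq \|\gamma\|_p$. The essential observation is that on a single linear piece, subdividing the segment cannot increase its $p$-variation, since for collinear increments $a_1,\ldots,a_m$ with fixed vector sum $a$ one has $\sum |a_j|^p \leq (\sum |a_j|)^p = |a|^p$; the bound on the full interpolation then follows from this together with the superadditivity of $p$-variation across the partition points of $\mathcal{P}$. Applying part 1 to both $\gamma$ and $\gamma^{\mathcal{P}}$ yields the uniform bound $\|f(\gamma)-f(\gamma^{\mathcal{P}})\|_p \leq 2C\|\gamma\|_p$. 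Second, since a path of finite $p$-variation is continuous, hence uniformly continuous on $[0,1]$, and $\gamma^{\mathcal{P}}$ agrees with $\gamma$ at each $t_i \in \mathcal{P}$, one obtains $\|\gamma-\gamma^{\mathcal{P}}\|_\infty \to 0$, which the Lipschitz property transfers to $\|f(\gamma)-f(\gamma^{\mathcal{P}})\|_\infty \to 0$ as $\|\mathcal{P}\|\to 0$.

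Third, I would combine these via the elementary interpolation inequality: for $q>p$ and any function $g$ of finite $p$-variation,
\[
|g(t_{i+1})-g(t_i)|^q \leq (2\|g\|_\infty)^{q-p}|g(t_{i+1})-g(t_i)|^p,
\]
so $\|g\|_q^q \leq (2\|g\|_\infty)^{q-p}\|g\|_p^p$. Applied to $g = f(\gamma)-f(\gamma^{\mathcal{P}})$ with the two bounds just established, this gives $\|f(\gamma)-f(\gamma^{\mathcal{P}})\|_q \to 0$ as $\|\mathcal{P}\|\to 0$. The main obstacle is the first ingredient, the inequality $\|\gamma^{\mathcal{P}}\|_p \leq \|\gamma\|_p$; the remaining steps are routine once it is in hand.
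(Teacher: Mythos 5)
The paper does not actually prove this lemma; it cites \cite{St-flours} (Lemma 1.12, Lemma 1.18 for part 1; Proposition 1.14, Remark 1.19 for part 2), so your attempt at a self-contained argument is a fair test of whether you understand the ingredients. Your Part 1 is correct, and the overall three-step structure of Part 2 (a uniform $p$-variation bound, sup-norm convergence, and the interpolation inequality $\|g\|_q^q\leqslant(2\|g\|_\infty)^{q-p}\|g\|_p^p$) is exactly the standard route and is argued correctly.

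The gap is in the step you yourself flag as the main obstacle: the claim $\|\gamma^{\mathcal{P}}\|_p\leqslant\|\gamma\|_p$. The collinearity observation only handles a partition $\mathcal{Q}$ that lies entirely inside one segment $[t_i,t_{i+1}]$ of $\mathcal{P}$; for a general $\mathcal{Q}$ the adjacent increments of $\gamma^{\mathcal{P}}$ are \emph{not} collinear, so that observation does not apply. And superadditivity cannot rescue it: superadditivity gives $\sum_i|\gamma_{t_{i+1}}-\gamma_{t_i}|^p\leqslant\|\gamma\|_p^p$, but you would also need $\|\gamma^{\mathcal{P}}\|_p^p\leqslant\sum_i|\gamma_{t_{i+1}}-\gamma_{t_i}|^p$, which is a subadditivity statement and is \emph{false} in general. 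Concretely, take $\gamma$ the straight segment from $(0,0)$ to $(2,0)$ with $\mathcal{P}=\{0,1/2,1\}$: then $\|\gamma^{\mathcal{P}}\|_p^p=2^p$ while $\sum_i|\gamma_{t_{i+1}}-\gamma_{t_i}|^p=2$, so for $p>1$ the inequality you would need fails. What is needed instead is the convexity (``snapping'') argument of \cite{St-flours}, Proposition 1.14: for a partition point $s_j$ of $\mathcal{Q}$ interior to a segment of $\mathcal{P}$, the map $s\mapsto|\gamma^{\mathcal{P}}_{s}-\gamma^{\mathcal{P}}_{s_{j-1}}|^p+|\gamma^{\mathcal{P}}_{s_{j+1}}-\gamma^{\mathcal{P}}_{s}|^p$ is convex on that segment (being a convex function of an affine function of $s$), hence maximized at an endpoint, so $s_j$ can be moved to a point of $\mathcal{P}$ without decreasing the variation sum. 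Iterating reduces $\mathcal{Q}$ to a sub-partition of $\mathcal{P}$, and \emph{then} superadditivity of the $p$-variation of $\gamma$ finishes the bound. With that lemma in hand the rest of your Part 2 goes through.
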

We now prove Green's theorem for non-smooth Jordan curves. 
\begin{thm}
\label{Green's theorem for rough paths} Let $f,g:\mathbb{R}^{2}\rightarrow\mathbb{R}$
be functions with continuous first order derivatives, and let $\gamma:\left[0,1\right]\rightarrow\mathbb{R}^{2}$
be a positively oriented Jordan curve with finite $p$-variation,
where $1\leqslant p<2$. Let $x_{\cdot},y_{\cdot}$ denote the first
and second coordinate components of $\gamma_{\cdot}$ respectively.
Then 
\[
\int_{0}^{1}\left(f\left(\gamma_{s}\right)\mathrm{d}y_{s}-g\left(\gamma_{s}\right)\mathrm{d}x_{s}\right)=\int_{\mathrm{Int}(\gamma)}\left(\frac{\partial f}{\partial x}+\frac{\partial g}{\partial y}\right)\mathrm{d}x\mathrm{d}y,
\]
where the integral on the L.H.S. is understood as the Young's integral,
and $\mathrm{Int}\left(\gamma\right)$ denotes the interior of $\gamma.$ \end{thm}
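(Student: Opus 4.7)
The plan is to apply Theorem \ref{Jordan curve approximation} to produce a sequence of simple piecewise linear Jordan curves $\gamma^{\mathcal{P}_{n}}$ with $\|\mathcal{P}_{n}\|\to 0$, invoke the classical Green's theorem for polygonal Jordan curves (which applies since each $\gamma^{\mathcal{P}_{n}}$ is piecewise smooth and $f,g\in C^{1}$), and then pass to the limit on both sides of
\[
\int_{0}^{1}\bigl(f(\gamma_{s}^{\mathcal{P}_{n}})\,\mathrm{d}y_{s}^{\mathcal{P}_{n}}-g(\gamma_{s}^{\mathcal{P}_{n}})\,\mathrm{d}x_{s}^{\mathcal{P}_{n}}\bigr)=\int_{\mathrm{Int}(\gamma^{\mathcal{P}_{n}})}\Bigl(\frac{\partial f}{\partial x}+\frac{\partial g}{\partial y}\Bigr)\mathrm{d}x\,\mathrm{d}y.
\]
Two general observations will be used throughout: piecewise linear interpolation does not increase $p$-variation, so $\|\gamma^{\mathcal{P}_{n}}\|_{p}\leqslant\|\gamma\|_{p}$ uniformly in $n$; and by uniform continuity of $\gamma$, $\gamma^{\mathcal{P}_{n}}\to\gamma$ uniformly on $[0,1]$. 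Since all images lie inside a common compact set $K\subset\mathbb{R}^{2}$, I may replace $f,g$ by globally Lipschitz extensions without affecting either side of the identity.

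For the left-hand side I would combine Lemma \ref{p variation norm of piecewise linear interpolation} with Theorem \ref{Continuity theorem of Young integration}. Choosing any $q\in(p,p/(p-1))$, an interval that is nonempty precisely because $p<2$, gives $\tfrac{1}{p}+\tfrac{1}{q}>1$, so the relevant Young integrals are well-defined. Lemma \ref{p variation norm of piecewise linear interpolation}(2) applied to $f$ and to the coordinate projection yields $\|f(\gamma^{\mathcal{P}_{n}})-f(\gamma)\|_{q}\to 0$ and $\|y^{\mathcal{P}_{n}}-y\|_{q}\to 0$. Splitting the difference of Young integrals as
\[
\int_{0}^{1}\bigl[f(\gamma^{\mathcal{P}_{n}})-f(\gamma)\bigr]\mathrm{d}y^{\mathcal{P}_{n}}+\int_{0}^{1}f(\gamma)\,\mathrm{d}[y^{\mathcal{P}_{n}}-y],
\]
the Young continuity estimate bounds each term by a constant multiple of one of these vanishing factors times a uniformly controlled $p$-variation norm, and similarly for the $g$-term. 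This delivers convergence of the left-hand side to $\int_{0}^{1}\bigl(f(\gamma)\,\mathrm{d}y-g(\gamma)\,\mathrm{d}x\bigr)$.

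For the right-hand side I would show that $\mathbf{1}_{\mathrm{Int}(\gamma^{\mathcal{P}_{n}})}\to\mathbf{1}_{\mathrm{Int}(\gamma)}$ almost everywhere in $\mathbb{R}^{2}$; dominated convergence against the integrable envelope $\mathbf{1}_{K}\bigl(|\partial_{x}f|+|\partial_{y}g|\bigr)$ then closes the argument. The a.e.\ convergence rests on continuity of the winding number under uniform convergence of loops: for any point $p_{0}$ not lying on $\gamma$ or any $\gamma^{\mathcal{P}_{n}}$ we have $\mathrm{wn}(\gamma^{\mathcal{P}_{n}},p_{0})\to\mathrm{wn}(\gamma,p_{0})$, and since this is an integer-valued convergence, eventually $p_{0}\in\mathrm{Int}(\gamma^{\mathcal{P}_{n}})$ precisely when $p_{0}\in\mathrm{Int}(\gamma)$. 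The excluded set $\gamma([0,1])\cup\bigcup_{n}\gamma^{\mathcal{P}_{n}}([0,1])$ is a countable union of planar null sets: the polygonal images obviously have measure zero, and for $\gamma$, partitioning $[0,1]$ into $N$ subintervals of equal $p$-variation covers $\gamma([0,1])$ by $N$ balls whose $p$-th power radii sum to at most $\|\gamma\|_{p}^{p}$ independent of $N$, giving Hausdorff dimension at most $p<2$ and hence Lebesgue measure zero. The same winding number argument forces $\gamma^{\mathcal{P}_{n}}$ to be positively oriented for all large $n$, so the classical Green identity produces the correct sign.

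The main technical hurdle is the right-hand side, specifically controlling the symmetric difference $\mathrm{Int}(\gamma^{\mathcal{P}_{n}})\triangle\mathrm{Int}(\gamma)$ in Lebesgue measure; the hypothesis $p<2$ enters essentially here, both to ensure $\gamma([0,1])$ is a Lebesgue null set and to allow an admissible Young pairing $(p,q)$ on the left. Once these ingredients together with Theorem \ref{Jordan curve approximation} are in place, the remainder is a standard rough-path limiting argument.
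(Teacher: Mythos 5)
Your proof takes essentially the same route as the paper: approximate $\gamma$ by piecewise linear Jordan curves via Theorem \ref{Jordan curve approximation}, apply the classical Green's theorem to each, pass to the limit on the left via the same Young-integral decomposition using Theorem \ref{Continuity theorem of Young integration} and Lemma \ref{p variation norm of piecewise linear interpolation}, and on the right via a.e.\ convergence of indicators against the fact that $\gamma([0,1])$ is Lebesgue-null because its Hausdorff dimension is at most $p<2$. You are more explicit than the paper on two points it leaves implicit --- the winding-number argument justifying $\mathbf{1}_{\mathrm{Int}(\gamma^{\mathcal{P}_n})}\to\mathbf{1}_{\mathrm{Int}(\gamma)}$ a.e.\ and that orientation is eventually preserved, and the replacement of $f,g$ by globally Lipschitz functions before invoking Lemma \ref{p variation norm of piecewise linear interpolation} --- but both observations are correct and the overall structure is identical.
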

\begin{proof}
Fix $\varepsilon>0$. According to Theorem \ref{Jordan curve approximation},
let $\mathcal{P}_{\varepsilon}$ be a finite partition of $\left[0,1\right]$
such that $\|\mathcal{P}_{\varepsilon}\|<\varepsilon,$ and the piecewise
linear interpolation $\gamma^{\mathcal{P}_{\varepsilon}}$ of $\gamma$
over the partition points in $\mathcal{P}_{\varepsilon}$ is a Jordan
curve. Let $x^{\mathcal{P}_{\varepsilon}},y^{\mathcal{P}_{\varepsilon}}$
be the first and second components of $\gamma^{\mathcal{P}_{\varepsilon}}$
respectively. It follows from the classical Green's theorem for piecewise
smooth Jordan curve that 
\[
\int_{0}^{1}(f\left(\gamma_{s}^{\mathcal{P}_{\varepsilon}}\right)\mathrm{d}y_{s}^{\mathcal{P}_{\varepsilon}}-g\left(\gamma_{s}^{\mathcal{P}_{\varepsilon}}\right)\mathrm{d}x_{s}^{\mathcal{P}_{\varepsilon}})=\int_{\mathrm{Int}\left(\gamma^{\mathcal{P}_{\varepsilon}}\right)}\left(\frac{\partial f}{\partial x}+\frac{\partial g}{\partial y}\right)\mathrm{d}x\mathrm{d}y.
\]

For any $q\in(p,2),$ we know that
\begin{align*}
 & \left|\int_{0}^{1}(f\left(\gamma_{s}^{\mathcal{P}_{\varepsilon}}\right)\mathrm{d}y_{s}^{\mathcal{P}_{\varepsilon}}-\int_{0}^{1}f\left(\gamma_{s}\right)\mathrm{d}y_{s})\right|\\
= & \left|\int_{0}^{1}\left(f\left(\gamma_{s}^{\mathcal{P}_{\varepsilon}}\right)-f\left(\gamma_{s}\right)\right)\mathrm{d}y_{s}^{\mathcal{P}_{\varepsilon}}+\int_{0}^{1}f\left(\gamma_{s}\right)\mathrm{d}\left(y_{s}^{\mathcal{P}_{\varepsilon}}-y_{s}\right)\right|\\
\leqslant & \left|\int_{0}^{1}\left(f\left(\gamma_{s}^{\mathcal{P}_{\varepsilon}}\right)-f\left(\gamma_{s}\right)\right)\mathrm{d}y_{s}^{\mathcal{P}_{\varepsilon}}\right|+\left|\int_{0}^{1}f\left(\gamma_{s}\right)\mathrm{d}\left(y_{s}^{\mathcal{P}_{\varepsilon}}-y_{s}\right)\right|\\
\leqslant & 2\zeta\left(\frac{2}{q}\right)\left(\left\Vert f\left(\gamma_{\cdot}^{\mathcal{P}_{\varepsilon}}\right)-f\left(\gamma_{\cdot}\right)\right\Vert _{q}\left\Vert \gamma\right\Vert _{q}+\left\Vert f\left(\gamma_{\cdot}\right)\right\Vert _{q}\left\Vert \gamma_{\cdot}^{\mathcal{P}_{\varepsilon}}-\gamma_{\cdot}\right\Vert _{q}\right),
\end{align*}
where the final inequality follows from Theorem \ref{Continuity theorem of Young integration}
and Lemma \ref{p variation norm of piecewise linear interpolation}.
Therefore, by Lemma \ref{p variation norm of piecewise linear interpolation},
\[
\int_{0}^{1}f\left(\gamma_{s}^{\mathcal{P}_{\varepsilon}}\right)\mathrm{d}y_{s}^{\mathcal{P}_{\varepsilon}}\rightarrow\int_{0}^{1}f\left(\gamma_{s}\right)\mathrm{d}y_{s}
\]
as $\varepsilon\rightarrow0$. Similarly, 
\[
\int_{0}^{1}g\left(\gamma_{s}^{\mathcal{P}_{\varepsilon}}\right)\mathrm{d}y_{s}^{\mathcal{P}_{\varepsilon}}\rightarrow\int_{0}^{1}g\left(\gamma_{s}\right)\mathrm{d}y_{s}
\]
as $\varepsilon\rightarrow0$. 

On the other hand, as $\gamma$ has finite $p$ variation, it has
a $\frac{1}{p}$-Hölder parametrization. Therefore, $\gamma$ has
Hausdorff dimension less than $2$. In particular, this means that
$\gamma([0,1])$ has zero Lebesgue measure. By applying the bounded
convergence theorem to the integrand 
\[
\left(\frac{\partial f}{\partial x}+\frac{\partial g}{\partial y}\right)\mathbf{1}_{\mathrm{Int}\left(\gamma^{\mathcal{P}_{\varepsilon}}\right)},
\]
we have 
\[
\int_{\mathrm{Int}\left(\gamma^{\mathcal{P}_{\varepsilon}}\right)}\left(\frac{\partial f}{\partial x}+\frac{\partial g}{\partial y}\right)\mathrm{d}x\mathrm{d}y\rightarrow\int_{\mathrm{Int}\left(\gamma\right)}\left(\frac{\partial f}{\partial x}+\frac{\partial g}{\partial y}\right)\mathrm{d}x\mathrm{d}y
\]
as $\varepsilon\rightarrow0$. 

Now the proof is complete.\end{proof}
\begin{rem}
For rectifiable paths, there is a version of Green's theorem which
works for non-simple closed curves, involving the winding number of
a path. An interesting inequality in this respect is the Banchoff-Pohl
inequality (see \cite{generalized iso.}), which generalizes the isoperimetric inequality and asserts
that the winding number of a rectifiable curve is square-integrable.
The reason for the ``simple closed'' condition in our version of
Green's theorem is because in general the winding number of a non-simple
non-rectifiable curve is not integrable. The fact that we can approximate
the rough Jordan curves by piecewise linear interpolations which are
still Jordan means that the winding number of each approximation is
an indicator function, which is bounded by the indicator function
of a neighborhood of Int$\left(\gamma\right)$.
\end{rem}

\begin{rem}
A direct consequence of Theorem \ref{Green's theorem for rough paths}
is Cauchy's theorem for Jordan curves with finite $p$-variation where
$1\leqslant p<2$, according to the Cauchy-Riemann equation for holomorphic
functions.
\end{rem}

\subsection{Uniqueness of signature for Jordan curves with finite $p$-variation
($1\leqslant p<2$)}

The sequence of iterated integrals (formally known as the signature)
of paths plays a key role in rough path theory. A central open problem
in this area is to determine a path from its iterated integrals. In
the case of bounded variation paths, Hambly and Lyons \cite{tree like}
proved that two paths of bounded variation can have the same sequence
of iterated integrals if and only if they can be obtained from each
other by a ''tree-like deformation''. In \cite{simple curve uniqueness },
it was proved that for planar simple curves with finite $p$-variation,
the sequence of iterated integrals of the path determines the path
up to reparametrization. In the context of stochastic processes, it was proved in
\cite{LQ12} that the sequence of iterated Stratonovich's integrals
of Brownian motion determines the Brownian sample paths almost surely.
This result was extended to diffusion processes in \cite{Geng-inversion}. 

Here we are going to prove the uniqueness of signature for planar
Jordan curves with finite $p$-variation, where $1\leqslant p<2$
is fixed throughout the rest of this section.

We shall follow \cite{MR1654527} and embed the sequence of iterated
integrals of a path into the tensor algebra, which gives us a very
nice algebraic structure to work with. 

Assume that $\left(\mathbb{R}^{d}\right)^{\otimes n}$ is the tensor
produce space equipped with the Euclidean norm by identifying it with
$\mathbb{R}^{d^{n}}$. Let 
\[
T\left(\mathbb{R}^{d}\right)=\oplus_{n=0}^{\infty}\left(\mathbb{R}^{d}\right)^{\otimes n},
\]
and let $\pi_{n}:T\left(\mathbb{R}^{d}\right)\rightarrow\left(\mathbb{R}^{d}\right)^{\otimes n}$
denote the projection map. Assume that $\left\{ \mathbf{e}_{1},\ldots,\mathbf{e}_{d}\right\} $
is the standard basis of $\mathbb{R}^{d}$, and $\left\{ \mathbf{e}_{1}^{*},\ldots,\mathbf{e}_{d}^{*}\right\} $
is the corresponding dual basis of $\mathbb{R}^{d*}$. We embed $T\left(\left(\mathbb{R}^{d}\right)^{*}\right)$
into $T\left(\mathbb{R}^{d}\right)^{*}$ by extending the relation
\begin{eqnarray*}
\mathbf{e}_{i_{1}}^{*}\otimes\ldots\otimes\mathbf{e}_{i_{n}}^{*}\left(\mathbf{e}_{j_{1}}\otimes\ldots\otimes\mathbf{e}_{j_{k}}\right) & =\begin{cases}
1 & \mbox{if \ensuremath{n=k}\ and \ensuremath{i_{1}=j_{1},\cdots,i_{k}=j_{k}},}\\
0 & \mbox{otherwise,}
\end{cases}
\end{eqnarray*}
linearly. 
\begin{defn}
Let $\gamma:\left[0,1\right]\rightarrow\mathbb{R}^{d}$ be a continuous
path with finite $p$-variation, then the formal series of tensors
\[
S\left(\gamma\right)_{0,1}:=1+\sum_{i=1}^{\infty}\int_{0<s_{1}<\ldots<s_{i}<1}\mathrm{d}\gamma_{s_{1}}\otimes\ldots\otimes\mathrm{d}\gamma_{s_{i}},
\]
defined in terms of Young's integrals, is called the signature of
$\gamma$ over $\left[0,1\right]$. 
\end{defn}
We will briefly recall three important properties of signature.
\begin{prop}
\label{Properties of signature} Let $\gamma:\left[0,1\right]\rightarrow\mathbb{R}^{d}$
be a continuous path with finite $p$-variation. Then

1. (\cite{St-flours}, p. 32) Let $r:\left[0,1\right]\rightarrow\left[0,1\right]$
be a continuous increasing function, then 
\[
S\left(\gamma_{\cdot}\right)_{0,1}=S\left(\gamma_{r\left(\cdot\right)}\right)_{0,1}.
\]

2. (\cite{St-flours}, p. 32) For all $a\in\mathbb{\mathbb{R}}^{d}$,
\[
S\left(a+\gamma_{\cdot}\right)_{0,1}=S\left(\gamma_{\cdot}\right)_{0,1}.
\]

3. (\cite{St-flours}, Corollary 2.11) Let $\gamma_{n}$ be a sequence
of paths with finite $p$-variation and 
\[
\left\Vert \gamma-\gamma_{n}\right\Vert _{p}\rightarrow0,
\]
as $n\rightarrow\infty$, then for each $k\in\mathbb{N}$, 
\[
\left|\pi_{k}\left(S\left(\gamma_{n}\right)_{0,1}\right)-\pi_{k}\left(S\left(\gamma\right)\right)\right|\rightarrow0
\]
as $n\rightarrow\infty.$
\end{prop}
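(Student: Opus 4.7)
The plan is to verify each of the three assertions separately using the basic properties of Young's integral. Since $p<2$, Young's condition $\frac{1}{p}+\frac{1}{p}>1$ holds, so every iterated integral appearing in the definition of $S(\gamma)_{0,1}$ is well defined by Theorem \ref{Continuity theorem of Young integration}.

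For the translation invariance in part (2), I would observe that every Riemann sum used to define an iterated Young integral of $a+\gamma$ involves only the increments $(a+\gamma_{s_j})-(a+\gamma_{s_{j-1}})=\gamma_{s_j}-\gamma_{s_{j-1}}$, which are independent of $a$. Passing to the limit as the mesh size of the partition tends to zero, one concludes $S(a+\gamma_\cdot)_{0,1}=S(\gamma_\cdot)_{0,1}$ level by level in the tensor algebra.

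For the reparametrization invariance in part (1), the argument is similar in spirit but requires more care. If $r:[0,1]\to[0,1]$ is a continuous increasing bijection, then any partition $0=u_0<u_1<\cdots<u_n=1$ of $[0,1]$ pulls forward to a partition $0=r(u_0)<r(u_1)<\cdots<r(u_n)=1$ of $[0,1]$, and the increments of $\gamma\circ r$ over the first partition coincide with the increments of $\gamma$ over the second, since $(\gamma\circ r)_{u_j}-(\gamma\circ r)_{u_{j-1}}=\gamma_{r(u_j)}-\gamma_{r(u_{j-1})}$. Hence the corresponding Riemann sums defining each iterated integral are identical, and taking limits yields $S(\gamma\circ r)_{0,1}=S(\gamma)_{0,1}$. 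If $r$ is continuous and increasing but not injective, one first verifies that plateaux of $r$ contribute nothing to Riemann sums (as $\gamma\circ r$ is locally constant there) and reduces to the bijective case by compressing plateaux.

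For part (3), I would proceed by induction on the tensor level $k$. The base case $k=1$ is immediate, since $\pi_1(S(\gamma))_{0,1}=\gamma(1)-\gamma(0)$ and convergence in $p$-variation forces convergence of endpoints. For the inductive step, I would exploit the Chen-type identity
\[
\pi_k\bigl(S(\gamma)\bigr)_{0,t}=\int_0^t \pi_{k-1}\bigl(S(\gamma)\bigr)_{0,s}\otimes d\gamma_s,
\]
together with the continuity estimate in Theorem \ref{Continuity theorem of Young integration} applied with both exponents equal to $p$, to obtain a bound of the form
\[
\bigl|\pi_k(S(\gamma_n))_{0,1}-\pi_k(S(\gamma))_{0,1}\bigr|\leq C\bigl(\|\pi_{k-1}(S(\gamma_n))-\pi_{k-1}(S(\gamma))\|_p\|\gamma\|_p+\|\pi_{k-1}(S(\gamma_n))\|_p\|\gamma_n-\gamma\|_p\bigr).
\]
The inductive hypothesis together with the uniform boundedness of $\|\gamma_n\|_p$ forces the right-hand side to vanish as $n\to\infty$. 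The principal technical point, and the step where some care is needed, is in tracking the $p$-variation norms of $\pi_{k-1}(S(\gamma_n))$ across the iteration, but this is standard and follows from repeatedly applying the estimate of Theorem \ref{Continuity theorem of Young integration}.
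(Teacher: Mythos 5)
The paper itself offers no proof of this proposition; all three parts are stated with citations to Lyons--Caruana--L\'evy \cite{St-flours} (page 32 for parts 1 and 2, Corollary 2.11 for part 3). Your reconstruction is sound and is essentially the standard argument that those references contain. A few remarks on the details.

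Parts 1 and 2 are correct as you present them: the iterated integrals in the definition of $S(\gamma)_{0,1}$ are built from increments of $\gamma$, which are unaffected by a constant translation, and a continuous increasing reparametrization (with $r(0)=0$, $r(1)=1$) sends partitions to partitions without changing the Riemann sums. Your observation about plateaux of a non-injective $r$ is the right reduction.

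For part 3, your induction is the correct mechanism, but be careful to formulate the inductive hypothesis as convergence in the $p$-variation \emph{seminorm} of the path $t\mapsto\pi_{k-1}\bigl(S(\gamma_n)\bigr)_{0,t}$, not merely of the endpoint value at $t=1$, since that is what appears on the right-hand side of your Young estimate. This stronger statement does hold: at level one it reduces to $\|\gamma_n-\gamma\|_p\to0$, and the Young estimate from Theorem \ref{Continuity theorem of Young integration} (applied with $p=q$, admissible because $2/p>1$) propagates it to the next level via the algebraic split
\[
\int \pi_{k-1}\bigl(S(\gamma_n)\bigr)\otimes\mathrm{d}\gamma_n-\int \pi_{k-1}\bigl(S(\gamma)\bigr)\otimes\mathrm{d}\gamma=\int\bigl(\pi_{k-1}(S(\gamma_n))-\pi_{k-1}(S(\gamma))\bigr)\otimes\mathrm{d}\gamma+\int \pi_{k-1}\bigl(S(\gamma_n)\bigr)\otimes\mathrm{d}(\gamma_n-\gamma).
\]
The uniform bound on $\|\pi_{k-1}(S(\gamma_n))\|_p$ follows, as you say, from $\sup_n\|\gamma_n\|_p<\infty$ (which is forced by $\|\gamma_n-\gamma\|_p\to0$) and repeated application of the same Young estimate. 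With that one clarification the argument is complete.
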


It turns out that some terms in the signature of a curve can be reduced
to a single line integral. This is the key idea to prove our uniqueness
of signature result. 
\begin{prop}
\label{Fubini and Green theorem} Let $\gamma$ be a positively oriented
Jordan curve with finite $p$-variation. Let $x_{\cdot},y_{\cdot}$
be the first and second coordinate components of $\gamma$ respectively.
Then for any $k,n\geqslant 0$
\begin{eqnarray*}
 &  & \mathbf{e}_{1}^{*\otimes (k+1)}\otimes\mathbf{e}_{2}^{*\otimes (n+1)}\left(S\left(\gamma\right)_{0,1}\right)\\
 & = & \int_{0}^{1}\int_{0}^{s_{n+k+2}}\ldots\int_{0}^{s_{2}}\mathrm{d}x_{s_{1}}\ldots\mathrm{d}x_{s_{k+1}}\mathrm{d}y_{s_{k+2}}\ldots\mathrm{d}y_{s_{n+k+2}}\\
 & = & \frac{1}{k!n!}\int_{\mathrm{Int}(\gamma)}\left(x-x_{0}\right)^{k}\left(y_{1}-y\right)^{n}\mathrm{d}x\mathrm{d}y.
\end{eqnarray*}
\end{prop}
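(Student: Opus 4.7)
The plan is to break the proposition into two equalities: the first is essentially unpacking the definition of the signature, while the second is a Fubini-type reduction of the iterated integral to a single line integral, followed by an application of Theorem \ref{Green's theorem for rough paths}.

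For the first equality, I would simply observe that under the identification of $T(\mathbb{R}^{2*})$ as a subspace of $T(\mathbb{R}^{2})^{*}$, the pairing $\mathbf{e}_{i_{1}}^{*}\otimes\cdots\otimes\mathbf{e}_{i_{m}}^{*}$ picks out the coordinate iterated integral $\int_{0<s_{1}<\cdots<s_{m}<1}d\gamma_{s_{1}}^{i_{1}}\cdots d\gamma_{s_{m}}^{i_{m}}$ from $S(\gamma)_{0,1}$. With $\gamma^{1}=x$ and $\gamma^{2}=y$, the specific multi-index $(\underbrace{1,\ldots,1}_{k+1},\underbrace{2,\ldots,2}_{n+1})$ yields precisely the claimed iterated integral. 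No analytic work is needed here beyond knowing that Young's iterated integrals converge, which is guaranteed since $p<2$.

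For the second equality, the key observation is that the iterated integral separates into a $dx$-block and a $dy$-block. First, I would prove by induction the shuffle identity for a single coordinate,
\[
\int_{0<s_{1}<\cdots<s_{j}<t}dx_{s_{1}}\cdots dx_{s_{j}}=\frac{(x_{t}-x_{0})^{j}}{j!},
\]
which follows from the Young integration version of the chain rule (valid because $x$ has finite $p$-variation with $p<2$, so $(x-x_{0})^{j-1}$ also has finite $p$-variation and Young's integral against $dx$ reproduces the classical primitive). Applying this to the innermost $k+1$ $dx$-integrals with upper endpoint $s_{k+2}$ produces the factor $(x_{s_{k+2}}-x_{0})^{k+1}/(k+1)!$. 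A symmetric argument applied to the $n$ outermost $dy$-integrals $dy_{s_{k+3}}\cdots dy_{s_{n+k+2}}$ (using that $y_{1}-y$ is a primitive of $-dy$) collapses them into the factor $(y_{1}-y_{s_{k+2}})^{n}/n!$. The iterated integral therefore reduces to the single Young integral
\[
\int_{0}^{1}\frac{(x_{s}-x_{0})^{k+1}}{(k+1)!}\cdot\frac{(y_{1}-y_{s})^{n}}{n!}\,dy_{s}.
\]

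Finally, I apply Theorem \ref{Green's theorem for rough paths} with $f(x,y)=\frac{(x-x_{0})^{k+1}(y_{1}-y)^{n}}{(k+1)!\,n!}$ and $g\equiv0$: both are polynomials, hence $C^{1}$, and differentiating $f$ in $x$ recovers the integrand $(x-x_{0})^{k}(y_{1}-y)^{n}/(k!\,n!)$. This identifies the line integral above with $\frac{1}{k!\,n!}\int_{\mathrm{Int}(\gamma)}(x-x_{0})^{k}(y_{1}-y)^{n}\,dx\,dy$, completing the proof. The main technical obstacle is justifying the single-coordinate shuffle identity and the Fubini-style reordering in the Young setting; both are standard but should be flagged since they underpin the entire reduction. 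Everything else is either definitional or a direct invocation of the already-proved Green's theorem.
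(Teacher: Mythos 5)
Your proof is correct but takes a genuinely different route from the paper's. The paper proceeds by approximation: it establishes all three equalities for the piecewise linear interpolation $\gamma^{\mathcal{P}_{\varepsilon}}$ (where the shuffle/Fubini reduction and Green's theorem are classical, since the curve is piecewise smooth), then lets $\varepsilon\to0$, using the continuity of the signature (Proposition~\ref{Properties of signature}, item~3, combined with Lemma~\ref{p variation norm of piecewise linear interpolation}) on the iterated-integral side and the bounded-convergence argument from the proof of Theorem~\ref{Green's theorem for rough paths} on the area-integral side. You instead work directly at the level of $\gamma$: you collapse the iterated Young integral by the one-dimensional shuffle identity and a simplex-Fubini factorization, and then invoke Theorem~\ref{Green's theorem for rough paths} as a black box on the resulting single line integral with $f(x,y)=(x-x_{0})^{k+1}(y_{1}-y)^{n}/((k+1)!\,n!)$, $g\equiv0$. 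Your version is tidier in that it reuses the already-established generalized Green's theorem rather than re-running its limiting argument, but it shifts the technical burden onto the Young-level shuffle identity and the interchange of iterated Young integrals, which the paper sidesteps entirely by keeping all manipulations at the piecewise smooth stage and only passing to the limit at the very end. You are right to flag those Young-level facts as the genuine prerequisites of your argument; they are standard (Young chain rule applied to $u\mapsto(u-x_{0})^{j}$, and Young integration by parts for the simplex factorization), but in the paper's route they simply do not arise.
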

\begin{proof}
Note that 
\begin{eqnarray*}
 &  & \int_{0}^{1}\int_{0}^{s_{n+k+2}}\ldots\int_{0}^{s_{2}}\mathrm{d}x_{s_{1}}^{\mathcal{P}_{\varepsilon}}\ldots\mathrm{d}x_{s_{k+1}}^{\mathcal{P}_{\varepsilon}}\mathrm{d}y_{s_{k+2}}^{\mathcal{P}_{\varepsilon}}\ldots\mathrm{d}y_{s_{n+k+2}}^{\mathcal{P}_{\varepsilon}}\\
 & = & \frac{1}{\left(k+1\right)!n!}\int_{0}^{1}\left(x_{s_{k+1}}^{\mathcal{P}_{\varepsilon}}-x_{0}^{\mathcal{P}_{\varepsilon}}\right)^{k+1}\left(y_{1}^{\mathcal{P}_{\varepsilon}}-y_{s_{k+1}}^{\mathcal{P}_{\varepsilon}}\right)^{n}\mathrm{d}y_{s_{k+1}}^{\mathcal{P}_{\varepsilon}}\\
 & = & \frac{1}{k!n!}\int_{\mathrm{Int}\left(\gamma^{\mathcal{P}_{\varepsilon}}\right)}\left(x-x_{0}\right)^{k}\left(y_{1}-y\right)^{n}\mathrm{d}x\mathrm{d}y.
\end{eqnarray*}
By Lemma \ref{Properties of signature}, 
\begin{eqnarray*}
 &  & \int_{0}^{1}\int_{0}^{s_{n+k+2}}\ldots\int_{0}^{s_{2}}\mathrm{d}x_{s_{1}}^{\mathcal{P}_{\varepsilon}}\ldots\mathrm{d}x_{s_{k+1}}^{\mathcal{P}_{\varepsilon}}\mathrm{d}y_{s_{k+2}}^{\mathcal{P}_{\varepsilon}}\ldots\mathrm{d}y_{s_{n+k+2}}^{\mathcal{P}_{\varepsilon}}\\
 & \rightarrow & \int_{0}^{1}\int_{0}^{s_{n+k+2}}\ldots\int_{0}^{s_{2}}\mathrm{d}x_{s_{1}}\ldots\mathrm{d}x_{s_{k+1}}\mathrm{d}y_{s_{k+2}}\ldots\mathrm{d}y_{s_{n+k+2}}
\end{eqnarray*}
as $\varepsilon\rightarrow0$. 

As in the proof of Theorem \ref{Green's theorem for rough paths},
\[
\int_{\mathrm{Int}\left(\gamma^{\mathcal{P}_{\varepsilon}}\right)}\left(x-x_{0}\right)^{k}\left(y_{1}-y\right)^{n}\mathrm{d}x\mathrm{d}y\rightarrow\int_{\mathrm{Int}\left(\gamma\right)}\left(x-x_{0}\right)^{k}\left(y_{1}-y\right)^{n}\mathrm{d}x\mathrm{d}y
\]
as $\varepsilon\rightarrow0$. Therefore, the result follows. 
\end{proof}

\begin{rem}
The case of $k=1,n=0$ for Proposition \ref{Fubini and Green theorem}
has been proved by Werness \cite{Wer12}. The main difficulty in extending
to the general case involves mainly the interchange of iterated integrals. 
\end{rem}
The following lemma is the main reason why our result only works for
Jordan curves. 
\begin{lem}
\label{Jordan curve image determine the curve}Let $\gamma,\widetilde{\gamma}:\left[0,1\right]\rightarrow\mathbb{R}^{2}$
be two positively oriented Jordan curves such that $\gamma\left(\left[0,1\right]\right)=\widetilde{\gamma}\left(\left[0,1\right]\right)$
and $\gamma_{0}=\widetilde{\gamma}_{0}$. There exists a continuous
increasing function $r:\left[0,1\right]\rightarrow\left[0,1\right]$
such that $\gamma_{r\left(t\right)}=\widetilde{\gamma}$. In other
words, $\gamma$ and $\widetilde{\gamma}$ are equal up to a reparametrization. \end{lem}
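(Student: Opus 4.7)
The plan is to identify both $\gamma$ and $\widetilde{\gamma}$ with parametrizations of the common image $\Gamma:=\gamma([0,1])=\widetilde{\gamma}([0,1])$ by the circle $S^{1}=[0,1]/(0\sim 1)$, compose the two resulting homeomorphisms, and lift the composition back to the interval.

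First I would let $\pi:[0,1]\to S^{1}$ denote the quotient map. Because $\gamma$ and $\widetilde{\gamma}$ are continuous, injective on $[0,1)$, and satisfy $\gamma(0)=\gamma(1)$, $\widetilde{\gamma}(0)=\widetilde{\gamma}(1)$, they factor as $\gamma=\bar{\gamma}\circ\pi$ and $\widetilde{\gamma}=\bar{\widetilde{\gamma}}\circ\pi$ for continuous bijections $\bar{\gamma},\bar{\widetilde{\gamma}}:S^{1}\to\Gamma$. Since $S^{1}$ is compact and $\Gamma\subset\mathbb{R}^{2}$ is Hausdorff, each is a homeomorphism.

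Next I would set $R:=\bar{\gamma}^{-1}\circ\bar{\widetilde{\gamma}}:S^{1}\to S^{1}$. This is a self-homeomorphism of $S^{1}$, and because $\bar{\widetilde{\gamma}}([0])=\widetilde{\gamma}(0)=\gamma(0)=\bar{\gamma}([0])$, it fixes the basepoint $[0]$. The desired $r$ would then be obtained by lifting $R\circ\pi:[0,1]\to S^{1}$ through $\pi$ to a continuous map $[0,1]\to[0,1]$. The only remaining issue is to guarantee that this lift is monotone increasing with $r(0)=0$ and $r(1)=1$, which is equivalent to showing that $R$ is orientation-preserving.

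The orientation step is the main obstacle; it is where the hypothesis of positive orientation must be used. Pick any $p\in\mathrm{Int}(\gamma)=\mathrm{Int}(\widetilde{\gamma})$. By the definition of positive orientation, each curve has winding number $+1$ about $p$; equivalently each of the maps
\[
S^{1}\xrightarrow{\bar{\gamma}}\Gamma\xrightarrow{z\mapsto(z-p)/|z-p|}S^{1},\qquad S^{1}\xrightarrow{\bar{\widetilde{\gamma}}}\Gamma\xrightarrow{z\mapsto(z-p)/|z-p|}S^{1}
\]
has topological degree $+1$. Writing $\bar{\widetilde{\gamma}}=\bar{\gamma}\circ R$ and using multiplicativity of the degree forces $\deg R=+1$, so $R$ is orientation-preserving. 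Standard covering-space theory then lifts $R$ uniquely through $\pi$ to a continuous strictly increasing bijection $r:[0,1]\to[0,1]$ with $r(0)=0$ and $r(1)=1$, and the identity $\gamma\circ r=\widetilde{\gamma}$ follows directly from $\bar{\gamma}\circ R=\bar{\widetilde{\gamma}}$.
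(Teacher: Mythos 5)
Your proof is correct, but it takes a more topologically structured route than the paper's. You factor both parametrizations through the circle $S^{1}$, form the self-homeomorphism $R=\bar{\gamma}^{-1}\circ\bar{\widetilde{\gamma}}$ fixing the basepoint, identify positive orientation with winding number $+1$, and use multiplicativity of degree plus covering-space lifting to produce the increasing reparametrization $r$. The paper instead works directly on the interval: it removes the common basepoint, observes that $r(t)=\gamma^{-1}\circ\widetilde{\gamma}(t)$ is a self-homeomorphism of the open interval $(0,1)$ and hence strictly monotone, and then rules out the decreasing case by noting that monotone decreasing would force $\gamma$ and $\widetilde{\gamma}$ to have opposite orientations. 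The two arguments encode the same information---the paper's monotone $r$ on $(0,1)$ is essentially your lift restricted to the open arc, and the increasing/decreasing dichotomy is the elementary shadow of $\deg R=\pm1$---but the paper's version is self-contained and needs no degree theory or covering-space machinery, whereas your version makes the role of orientation as a topological invariant more transparent. Either is a legitimate proof; your approach trades elementary bookkeeping for a cleaner conceptual picture at the cost of invoking heavier standard tools.
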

\begin{proof}
As $\gamma$ is a Jordan curve,$\gamma([0,1])\backslash\gamma_{0}$
and $\widetilde{\gamma}([0,1])\backslash\widetilde{\gamma}_{0}$ are
both homeomorphic to $\left(0,1\right)$. Therefore, the function
$r:\left(0,1\right)\rightarrow\left(0,1\right)$ defined by $r\left(t\right)=\gamma^{-1}\circ\widetilde{\gamma}\left(t\right)$
is a homeomorphism $\left(0,1\right)\rightarrow\left(0,1\right)$.
Hence, it is strictly monotone. This implies that $\lim_{t\rightarrow0}r\left(t\right)$
exists. Moreover, it is easy to see that $\lim_{t\rightarrow0}r\left(t\right)\in\left\{ 0,1\right\} $. 

If $\lim_{t\rightarrow0}r\left(t\right)=0$, then $r$ can be extended
to a continuous increasing function on $\left[0,1\right]$. As $\gamma_{r\left(t\right)}=\widetilde{\gamma}_{t}$,
we know that $\gamma$ and $\widetilde{\gamma}$ equal up to reparametrization.
If $\lim_{t\rightarrow0}r\left(t\right)=1,$ then $\lim_{t\rightarrow1}r\left(t\right)=0$
and $r\left(t\right)$ is decreasing. This implies that $r\left(1-t\right)$
is an increasing continuous function. Therefore, $\gamma$ and $\widetilde{\gamma}$
have opposite orientations, which is a contradiction.
\end{proof}

Now we are in position to state and prove our result on the uniqueness
of signature for planar Jordan curves.
\begin{thm}
\label{Uniqueness of signature for Jordan curve} Let $\gamma,\tilde{\gamma}:\left[0,1\right]\rightarrow\mathbb{R}^{2}$
be a Jordan curves with finite $p$-variation. Then $S\left(\gamma\right)_{0,1}=S\left(\tilde{\gamma}\right)_{0,1}$
if and only if $\gamma$ and $\tilde{\gamma}$ is a translation and
a reparametrization of each other. \end{thm}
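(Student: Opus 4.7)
The ``if'' direction is immediate from Proposition \ref{Properties of signature}, parts 1 and 2. For the converse, the overall plan is to reduce to the case where both curves start at the origin, use Proposition \ref{Fubini and Green theorem} to recognise that the signature coefficients encode all polynomial moments of $\mathbf{1}_{\mathrm{Int}(\gamma)}$, and then apply a method-of-moments argument followed by Lemma \ref{Jordan curve image determine the curve} to identify the two curves up to reparametrization.

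First I would replace $\gamma$ by $\gamma - \gamma_0$ and $\tilde{\gamma}$ by $\tilde{\gamma} - \tilde{\gamma}_0$. By Proposition \ref{Properties of signature} part 2 the signatures remain equal, and since the curves are closed, both translated paths begin and end at $0$. Proposition \ref{Fubini and Green theorem} is stated for positively oriented Jordan curves, so before invoking it I would verify that $\gamma$ and $\tilde{\gamma}$ share an orientation. The signed area is encoded in the signature as $\tfrac{1}{2}(\mathbf{e}_1^* \otimes \mathbf{e}_2^* - \mathbf{e}_2^* \otimes \mathbf{e}_1^*)(S(\gamma)_{0,1})$, which equals $\pm |\mathrm{Int}(\gamma)|$ according to the orientation; since $|\mathrm{Int}(\gamma)| > 0$, equal signatures force a common orientation, and if both curves are negatively oriented I replace them by their time-reversals (which inverts each signature in the tensor algebra and therefore preserves their equality). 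Hence WLOG both are positively oriented Jordan curves based at $0$.

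With $x_0 = 0$ and $y_1 = 0$, Proposition \ref{Fubini and Green theorem} applied to both curves together with signature equality yields
\[
\int_{\mathrm{Int}(\gamma)} x^k y^n\, \mathrm{d}x\, \mathrm{d}y = \int_{\mathrm{Int}(\tilde{\gamma})} x^k y^n\, \mathrm{d}x\, \mathrm{d}y
\]
for all $k, n \geq 0$. Both interiors are contained in a common closed disk $\overline{B(0,R)}$, so Stone--Weierstrass density of polynomials in $C(\overline{B(0,R)})$ upgrades this to equality of the finite Borel measures $\mathbf{1}_{\mathrm{Int}(\gamma)}\, \mathrm{d}x\mathrm{d}y$ and $\mathbf{1}_{\mathrm{Int}(\tilde{\gamma})}\, \mathrm{d}x\mathrm{d}y$; equivalently, the symmetric difference of the two interiors has Lebesgue measure zero. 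Since both interiors are open, every point of $\mathrm{Int}(\gamma)$ has an open neighbourhood in $\mathrm{Int}(\gamma)$ on which $\mathrm{Int}(\tilde{\gamma})$ is dense, so the two closures coincide; taking topological boundaries then gives $\gamma([0,1]) = \tilde{\gamma}([0,1])$. Lemma \ref{Jordan curve image determine the curve} now provides an increasing continuous reparametrization $r$ with $\gamma_{r(t)} = \tilde{\gamma}_t$, and undoing the initial translation supplies the translation part of the statement.

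The step I expect to be the main obstacle is the passage from the almost-everywhere equality of the two indicator functions to the pointwise equality of the two Jordan images. This passage uses essentially that both interiors are open planar domains bounded by Jordan curves, so that a measure-zero open set is empty and the Jordan image is recovered as the topological boundary of the interior. Everything else, namely the reduction to origin-based positively oriented curves and the extraction of polynomial moments from the signature, is a direct combination of Propositions \ref{Properties of signature} and \ref{Fubini and Green theorem} with a standard moment-problem argument.
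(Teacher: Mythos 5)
Your proposal is correct and follows essentially the same route as the paper: translate to the origin, read the orientation off a level-two signature coefficient, use Proposition \ref{Fubini and Green theorem} to equate all polynomial moments of $\mathbf{1}_{\mathrm{Int}(\gamma)}$ and $\mathbf{1}_{\mathrm{Int}(\tilde{\gamma})}$, deduce almost-everywhere equality of the two indicators, upgrade to equality of the Jordan images via openness and the Jordan curve theorem, and finish with Lemma \ref{Jordan curve image determine the curve}. The only cosmetic differences are that you invoke Stone--Weierstrass where the paper uses injectivity of the Fourier transform on $L^1$, and you use the antisymmetrized level-two coefficient (the L\'evy area) rather than $\mathbf{e}_1^*\otimes\mathbf{e}_2^*$ alone to detect orientation; both choices are interchangeable.
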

\begin{proof}
Sufficiency follows from Proposition \ref{Properties of signature}.
We now consider the necessity part.

By applying a translation we may assume that $\gamma_{1}=\gamma_{0}=\tilde{\gamma}_{0}=\tilde{\gamma}_{1}=0$. 

As $\mathbf{e}_{1}^{*}\otimes\mathbf{e}_{2}^{*}\left(S\left(\gamma\right)_{0,1}\right)=\mathbf{e}_{1}^{*}\otimes\mathbf{e}_{2}^{*}\left(S\left(\tilde{\gamma}\right)_{0,1}\right)$,
by Proposition \ref{Fubini and Green theorem} we have 
\[
\left(-1\right)^{\varepsilon\left(\gamma\right)}\int_{\mathrm{Int}\left(\gamma\right)}\mathrm{d}x\mathrm{d}y=\left(-1\right)^{\varepsilon\left(\tilde{\gamma}\right)}\int_{\mathrm{Int}\left(\tilde{\gamma}\right)}\mathrm{d}x\mathrm{d}y,
\]
where $\varepsilon\left(\gamma\right)$ is $0$ if $\gamma$ is positively
oriented and $1$ otherwise. As $\int_{\mathrm{Int}\left(\gamma\right)}\mathrm{d}x\mathrm{d}y$
and $\int_{\mathrm{Int}\left(\tilde{\gamma}\right)}\mathrm{d}x\mathrm{d}y$
are both positive, we must have $\gamma$ and $\tilde{\gamma}$ oriented
in the same direction. 

Without loss of generality, assume both $\gamma$ and $\tilde{\gamma}$
are positively oriented. By Proposition \ref{Fubini and Green theorem}
and that $S\left(\gamma\right)_{0,1}=S\left(\tilde{\gamma}\right)_{0,1}$,
we have 
\[
\int_{\mathrm{Int}\left(\gamma\right)}\left(x-x_{0}\right)^{k}\left(y_{1}-y\right)^{n}\mathrm{d}x\mathrm{d}y=\int_{\mathrm{Int}\left(\tilde{\gamma}\right)}\left(x-\tilde{x}_{0}\right)^{k}\left(\tilde{y}_{1}-y\right)^{n}\mathrm{d}x\mathrm{d}y
\]
for all $k,n\geq0$. Therefore, 
\[
\int_{\mathrm{Int}\left(\gamma\right)}e^{i(\lambda_{1}x+\lambda_{2}y)}\mathrm{d}x\mathrm{d}y=\int_{\mathrm{Int}\left(\tilde{\gamma}\right)}e^{i(\lambda_{1}x+\lambda_{2}y)}\mathrm{d}x\mathrm{d}y
\]
for all $\lambda_{1},\lambda_{2}\in\mathbb{R}$. 

Both $\mathbf{1}_{\mathrm{Int}\left(\gamma\right)}$ and $\mathbf{1}_{\mathrm{Int}\left(\tilde{\gamma}\right)}$
are in $L^{1}$ and by the injectivity of the Fourier transform on
$L^{1}$, we have 
\[
\mathbf{1}_{\mathrm{Int}\left(\gamma\right)}\left(x,y\right)=\mathbf{1}_{\mathrm{Int}\left(\tilde{\gamma}\right)}\left(x,y\right)
\]
for almost every $\left(x,y\right)\in\mathbb{R}^{2}$. In particular,
this implies that both $\mathrm{Int}\left(\gamma\right)\backslash\overline{\mathrm{Int}\left(\tilde{\gamma}\right)}\subset\mbox{\ensuremath{\mathrm{Int}}}\left(\gamma\right)\backslash\mathrm{Int}\left(\tilde{\gamma}\right)$
and $\mathrm{Int}\left(\tilde{\gamma}\right)\backslash\overline{\mathrm{Int}\left(\gamma\right)}\subset\mathrm{Int}\left(\gamma\right)\backslash\mathrm{Int}\left(\tilde{\gamma}\right)$
are null sets in $\mathbb{R}^{2}$. However, since both $\mathrm{Int}\left(\tilde{\gamma}\right)\backslash\overline{\mathrm{Int}\left(\gamma\right)}$
and $\mathrm{Int}\left(\gamma\right)\backslash\overline{\mathrm{Int}\left(\tilde{\gamma}\right)}$
are open, they must be empty. Therefore, 
\[
\overline{\mathrm{Int}\left(\tilde{\gamma}\right)}=\overline{\mathrm{Int}\left(\gamma\right)}.
\]
By the Jordan curve theorem, we have 
\[
\overline{\mathbb{R}^{2}\backslash\overline{\mathrm{Int}\left(\tilde{\gamma}\right)}}=\mathbb{R}^{2}\backslash\mathrm{Int}\left(\tilde{\gamma}\right).
\]
Therefore, $\mathrm{Int}\left(\tilde{\gamma}\right)=\mathrm{Int}\left(\gamma\right)$
and so $\gamma([0,1])=\tilde{\gamma}([0,1])$ and by Lemma \ref{Jordan curve image determine the curve},
$\gamma$ and $\tilde{\gamma}$ are equal up to reparametrization. 
\end{proof}

\begin{rem}
The proof of Theorem \ref{Uniqueness of signature for Jordan curve}
gives a very explicit way of computing the moments of the finite measure
$\mathbf{1}_{\mathrm{Int}\left(\gamma\right)}\left(x,y\right)\mathrm{d}x\mathrm{d}y$
from the signature of $\gamma.$ However, due to possible difficulty
of numerically inverting the Fourier transform, this is still not
an explicit reconstruction scheme for a Jordan curve from its signature.
In fact, an explicit reconstruction scheme in general remains a significant
open problem in rough path theory. 
\end{rem}
\section*{Acknowledgement}
The authors wish to thank Professor Terry Lyons for his valuable suggestions on the present paper. The authors are supported by the Oxford-Man Institute at University of Oxford. The first author is also supported by ERC (Grant Agreement No.291244 Esig).

\end{document}